\newtheorem{theorem}{Theorem}[section]
\newtheorem{thm}[theorem]{Theorem}
\newtheorem{cor}[theorem]{Corollary}
\newtheorem{lem}[theorem]{Lemma}
\newtheorem{prop}[theorem]{Proposition}
\theoremstyle{definition}
\newtheorem{defn}[theorem]{Definition}
\newtheorem{conv}[theorem]{Convention}
\newtheorem{conj}[theorem]{Conjecture}
\theoremstyle{remark}
\newcommand{\mbb}{\mathbb}
\newcommand{\ZZ}{\mbb{Z}}
\newcommand{\CC}{\mbb{C}}
\newcommand{\AAA}{\mbb{A}}
\newcommand{\PP}{\mbb{P}}
\newcommand{\HH}{\mbb{H}}
\newcommand{\mc}{\mathcal}
\newcommand{\mcN}{\mc{N}}
\newcommand{\mcX}{\mc{X}}
\newcommand{\mfm}{\mathfrak{m}}
\newcommand{\OO}{\mc{O}}
\newcommand{\wht}{\widehat}
\newcommand{\whts}{\wht{s}}
\newcommand{\whtt}{\wht{t}}
\newcommand{\whtOO}{\wht{\OO}}
\newcommand{\whtr}{\wht{r}}
\newcommand{\SP}{\text{Spec }}
\newcommand{\p}{\mathbb{P}^{1}}
\newsavebox{\sembox}
\newlength{\semwidth}
\newlength{\boxwidth}
\newsavebox{\semrbox}
\newlength{\semrwidth}
\newlength{\boxrwidth}
\title
{Weak approximation for cubic hypersurfaces}
\author[Tian]{Zhiyu Tian}
\address{
Department of Mathematics 253-37\\
California Institute of Technology \\
Pasadena, CA, 91125}
\email{tian@caltech.edu}
\date{\today}
\begin{document}


\begin{abstract}
We prove weak approximation for smooth cubic hypersurfaces of dimension at least $2$ defined over the function field of a complex curve.
\end{abstract}


\maketitle


\section{introduction}
Given an algebraic variety $X$ over a number field or function field $F$, a natural question is whether the set of rational points $X(F)$ is non-empty. If it is non-empty, then how many rational points are there? In particular, are they Zariski dense? Do they satisfy \emph{weak approximation}? 

In this article, we address the weak approximation question for cubic hypersurfaces defined over the function field of a complex curve.

Smooth cubic hypersurfaces of dimension at least $2$ belong to the class of ``rationally connected varieties". Roughly speaking, a smooth projective variety $X$ is \emph{rationally connected} if for any two general points $x$ and $y$, there is a rational curve connecting them. For more precise definition and properties of rationally connected varieties, see \cite{Kollar96}, Chap. IV.

After the pioneering work of Graber-Harris-Starr \cite{GHS03}, which established the existence of rational points for rationally connected varieties defined over the function field of a curve, there has been much research centered around the arithmetic of such varieties, see, e.g. \cite{HT06}, \cite{WAhypersurface}, \cite{BadReduction}, \cite{WAIsotrivial}, \cite{Knecht}, \cite{XuWA}.

In particular, Hassett and Tschinkel \cite{HT06} conjectured that weak approximation holds for all smooth projective rationally connected varieties defined over the function field of a complex curve.

We first state the conjecture in the arithmetic form. Let $B$ be a smooth projective connected complex curve. For any closed point $b \in B$, denote by $\whtOO_{B, b}$ the completion of the local ring at the point $b$ and $\text{Frac}\whtOO_{B, b}$ the corresponding fraction field. Let
$$
\AAA=\prod_{b \in B}^{\circ} \text{Frac}\whtOO_{B, b}
$$
be the ad{\'e}les over the function field $\CC(B)$, where all but finitely many of the factors in the product are in $\whtOO_{B, b}$. Finally let $\mcX_\eta$ be a smooth rationally connected variety defined over the function field $\CC(B)$. Then the weak approximation conjecture can be formulated as saying that the set of rational points $\mcX_\eta (\CC(B))$ is dense in $\mcX_\eta(\AAA)$.

For our purpose, it is more useful to formulate the conjecture in the following geometric form.

\begin{conj}[\cite{HT06}]
 Let $\pi:\mcX \rightarrow B$ be a flat surjective morphism from a projective variety to a smooth projective curve such that a general fiber is smooth and rationally connected (such a map $\mcX \rightarrow B$ is called a \emph{model} of the generic fiber).
Then the morphism $\pi$ satisfies
\emph{weak approximation}. That is, for every finite sequence
$(b_1,\dots,b_m)$ of distinct closed points of $B$, for every sequence
$(\whts_1,\dots,\whts_m)$ of formal power series sections of $\pi$
over $b_i$, and for every positive integer $N$, there exists a regular
section $s$ of $\pi$ which is congruent to
$\whts_i$ modulo $\mfm_{B,b_i}^{N}$ for every $i=1,\dots,m$.
\end{conj}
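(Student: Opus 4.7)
The plan is to follow the \emph{comb-and-smoothing} framework that is standard for weak approximation on rationally connected fibrations: start from a section guaranteed by Graber--Harris--Starr, decorate it with chains of rational curves in the fibers over $b_1,\dots,b_m$ that realize the prescribed formal jets, and then smooth the resulting reducible curve to a genuine section of $\pi$ with the required formal behavior.

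First, by \cite{GHS03} the morphism $\pi$ admits at least one regular section $s_0:B\to\mcX$. By attaching very free rational curves in smooth fibers over auxiliary general points of $B$ and smoothing (which uses rational connectedness of the generic fiber), one arranges that $s_0$ is \emph{free} as a section: its normal bundle $N_{s_0/\mcX}$ is sufficiently positive on $B$ that prescribing a bounded number of higher-order jet conditions cuts out a non-empty smooth subspace of the relevant parameter space.

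Second, for each $b_i$ construct a chain $T_i$ of rational curves contained in the fiber $\mcX_{b_i}$, meeting $s_0$ at $s_0(b_i)$ (after a preliminary modification of $s_0$ at $b_i$ if required), whose formal geometry at $b_i$ can accommodate the prescribed $N$-jet $\whts_i$. Attaching these teeth to $s_0$ yields a comb $C=s_0\cup T_1\cup\cdots\cup T_m$. One then applies Koll\'ar-style smoothing to deform $C$ to a regular section $s$ of $\pi$ while preserving the jet constraints; this reduces to the deformation-theoretic statement that the evaluation map
$$
\mathrm{ev}\colon\ \mathrm{Sec}(\pi)\ \longrightarrow\ \prod_{i=1}^{m} J^{N}_{b_i}(\mcX/B)
$$
is smooth and dominant at $[C]$, which in turn follows from the vanishing of $H^1$ of a suitably twisted restricted normal sheaf of $C$ once the teeth are chosen sufficiently free.

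The main obstacle is the second step. For an arbitrary rationally connected generic fiber, the special fibers $\mcX_{b_i}$ of a given model may be badly singular, or may simply fail to contain rational curves realizing every prescribed $N$-jet of $\mcX$ at $b_i$; in that regime neither the existence nor the freeness of the teeth can be controlled uniformly. Overcoming this requires either producing a preferred model in which every marked fiber has tractable rational-curve geometry, or exploiting geometric features intrinsic to the specific RC class --- such as the abundance of lines and conics on smooth cubics used in this paper --- to manufacture the teeth and to verify the jet-realization property. All currently known cases of the conjecture, including the cubic hypersurface case established here, proceed by the latter route; producing a uniform argument valid for every smooth rationally connected generic fiber is the reason the conjecture remains open in full generality.
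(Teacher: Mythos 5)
The statement you were asked to prove is in fact a conjecture, not a theorem of the paper --- the paper attributes it to Hassett and Tschinkel and leaves it open. You correctly recognize this and, rather than claiming a proof, you sketch the standard comb-and-smoothing strategy and identify the obstruction that blocks it in full generality. That is the right response. Your account of the first step (Graber--Harris--Starr supplies a section; one then makes it free by attaching very free rational curves in smooth fibers and smoothing) and your identification of the difficulty (no uniform control over the rational-curve geometry of the bad fibers $\mcX_{b_i}$, hence no way to realize arbitrary $N$-jets there) both match the known state of the art. The paper itself proves the conjecture only for smooth cubic hypersurfaces of dimension at least $2$, and it does so precisely by the route you flag: it passes to Corti's standard model, uses a cyclic base change and birational modification to reduce the worst special fibers to ones with du Val singularities while keeping track of the Galois action, and exploits the geometry of lines on cubics to move the approximation problem into the smooth locus of the fibration, where the comb-and-smoothing machinery applies. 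A uniform argument valid for every smooth rationally connected generic fiber is, as you say, not currently known.
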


For the equivalence of the two formulations, see section 1 of the survey article \cite{WASurvey}, which provides a nice introduction and summary of known results of weak approximation in the function field case (as of 2008).

Some special cases of the conjecture are known, e.g.
\begin{itemize}
\item $\PP^n$, conic bundles over $\PP^1$, del Pezzo surfaces of degree at least $4$, \cite{CTGilleWA},
\item low degree complete intersections of degree $(d_1, \ldots, d_c)$ such that $\sum d_i^2 \leq n+1$, \cite{dHS}, \cite{WASurvey},
\item smooth cubic hypersurfaces in $\PP^n, n\geq 6$ \cite{WAhypersurface},
\item isotrivial families \cite{WAIsotrivial},
\item at places of good reduction (for any family) \cite{HT06},
\item a general family of del Pezzo surfaces of degree at most $3$, \cite{BadReduction},\cite{Knecht}, \cite{XuWA}, and
\item a smooth hypersurface with square-free discriminant \cite{WAhypersurface}.
\end{itemize}

We notice an interesting difference between the cases completely understood and the other cases. Namely, the former cases are proved by studying the global geometry over the function field while the latter cases are proved by studying the local singular fibers and the singularities of the total space.

In some sense, our paper is a combination of the two approaches. The main theorem is the following.
\begin{thm}\label{main}
Let $X$ be a smooth cubic hypersurface of dimension at least $2$ defined over the function field of a complex curve. Then weak approximation holds at all places.
\end{thm}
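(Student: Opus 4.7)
The plan is to reduce the theorem to producing, at one place of bad reduction $b_0 \in B$ at a time, a regular section $s$ of $\pi:\mcX\to B$ matching a prescribed formal section $\whts_0$ modulo $\mfm_{B,b_0}^N$ for arbitrary $N$. This reduction is legitimate because weak approximation at places of good reduction is the theorem of \cite{HT06}, and the multi-jet problem decomposes into single-jet problems at each place after one handles the jets of good reduction by \cite{HT06}.

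The deformation-theoretic reduction is standard: if $s_0$ is a regular section with $s_0(b_0)=\whts_0(b_0)$ and the normal bundle $N_{s_0/\mcX}$ is sufficiently positive so that $H^1(B,N_{s_0/\mcX}(-Nb_0))=0$, then $s_0$ can be deformed within the Hilbert scheme of sections to match $\whts_0$ to order $N$ at $b_0$. The standard recipe to produce such a positive section is to attach very free rational curves---supplied by the rational connectedness of the generic cubic fiber---to an initial section in smooth fibers $\mcX_b$ for $b\neq b_0$, form a comb, and smooth it while preserving the value at $b_0$. The problem thus reduces to constructing an initial section $s_0$ through the pointed value $\whts_0(b_0)$.

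The main geometric input is the rich supply of low-degree rational curves on smooth cubic hypersurfaces---in particular, lines and conics---together with the ``third-intersection'' construction: given two sections of $\pi$, the relative line spanned by them meets $\mcX$ in a residual section. Working with the relative Fano scheme of lines $F_1(\mcX/B)$, one can propagate the given jet into a genuine section of $\pi$. When $\whts_0(b_0)$ is a smooth point of $\mcX_{b_0}$, the construction is essentially global: glue a line on a nearby smooth fiber to a rational curve on the special fiber through the specialization of the jet, then smooth the resulting broken curve to obtain $s_0$.

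The hard part is the case in which $\whts_0(b_0)$ lies in the singular locus of $\mcX_{b_0}$, where a careful local study is needed. I would combine the above global strategy with a local analysis of the model near $b_0$, possibly after replacing $\mcX$ by a resolution of singularities or another birational model, and use the geometry of lines and conics on the modified special fiber through the specialization of the jet. A case analysis by singularity type of $\mcX_{b_0}$ will be required. The lowest-dimensional cases---cubic surfaces and cubic threefolds, which are the principal new cases since the result for cubics in $\PP^n$ with $n\geq 6$ was already known---are expected to be the most delicate, because the normal bundle of a section has small rank and the supply of rational curves on a singular fiber through a singular point is more restricted, leaving little slack for the positivity and smoothing arguments.
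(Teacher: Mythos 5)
Your proposal outlines a plausible top-level strategy---Lefschetz pencil reduction, comb-and-smooth deformation theory to adjust a section, and use of the third-intersection geometry of cubics---but it stops short of the actual content of the theorem. The hard part, which you correctly identify as the case where $\whts_0(b_0)$ lands on or near a bad fiber, is left as ``a careful local study,'' ``possibly after replacing $\mcX$ by a resolution,'' ``a case analysis by singularity type.'' That is a description of the difficulty, not a proof, and the deformation step you do describe breaks down precisely there: the Hassett--Tschinkel comb argument produces a section matching a given jet only when the formal section and the auxiliary section sit in the smooth locus of $\pi$ over the bad place, and when the central fiber is a cone over an elliptic curve or is non-normal the smooth locus is not strongly rationally connected (it can contain no rational curve at all through the elliptic base of the cone), so no amount of attaching very free curves and smoothing makes the method run.

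The paper's solution has two concrete ingredients your proposal lacks. First, a local one: for a Corti standard model whose central fiber is a cone over an elliptic curve or non-normal, a ramified base change $t=r^\ell$ followed by a weighted change of coordinates (a blow-up/blow-down) produces a new family whose central fiber has only du Val singularities, with the Galois group $G=\ZZ/\ell\ZZ$ acting compatibly; approximation is then done $G$-equivariantly on the new family (Proposition~\ref{prop:basechange}, Theorem~\ref{thm:G-equiv}, Lemma~\ref{lem:approximate}). This requires a genuinely $G$-equivariant deformation theory (Lemma~\ref{lem-equiv-lifting}, Lemma~\ref{lem:EquivSmoothing}), not a straightforward adaptation of the comb argument. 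Second, a global one: the third-intersection construction is not used merely to ``propagate the jet'' but to make a specific reduction. The line $\wht L$ through $\whts_0$ and the target $\whts$ hits the family in a residual formal section $\whts'$ over which one has no control; but after approximating $\wht L$ by a genuine relative line $L$ through $s$, the residual degree-$2$ multisection already agrees with $\whts'$ to high order for free, so one only needs to approximate $\whts_0$. Since $\whts_0$ comes from a globally chosen section it can be arranged (Lemma~\ref{lem:smoothlocus}, via a dimension count using terminal singularities) to lie entirely in the smooth locus. That single maneuver is what eliminates the ``arbitrary formal section into a bad fiber'' problem; without it, the reduction you propose to ``constructing an initial section through $\whts_0(b_0)$'' does not close the argument, and the iterated-blow-up bookkeeping needed to fix jets of order $N>0$ is also missing from your sketch.
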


We conclude this introduction by explaining the idea of the proof. First of all, by choosing a Lefschetz pencil and using standard facts about weak approximation, one reduces to prove weak approximation for cubic surfaces.

There are two new ingredients in the proof. The first one is local. The observation is that when the central fiber is a cone over an irreducible plane cubic curve or non-normal (i.e. the worst degenerate case for families of cubic surfaces, see \ref{Corti}), one can make a ramified base change and a birational modification so that the new central fiber has at worst du Val singularities. One just needs to keep track of the Galois action to get back to the original family. The singularities have been greatly improved during this process.

The second new idea is global and geometric. Given a local formal section that we want to approximate, say $\whts$,  we choose a section $s$, whose restriction to the formal neighborhood  gives a formal section $\whts_0$. Then the two formal sections determine a unique line $\wht{L}$, which intersects the family at a third local formal section $\whts'$. One can find a line $L$ defined over the function field, which contains the rational point corresponding to the section $s$, and approximate $\wht{L}$ to order $N$ (i.e. weak approximation for the space of lines containing $s$). The line $L$ (generally speaking) intersects the family at $s$ and a degree $2$ multisection $\sigma$.

The next step is to deform $\sigma$ so that it approximates the formal sections $\whts_0$ and $\whts'$. This is equivalent to a special case of weak approximation after a degree $2$ base change. However, the multisection $\sigma$ already approximates $\whts'$ (even though we have no control on this formal section) to order $N$. Thus one only needs to approximate $\whts_0$, which comes from the section and can be carefully chosen to lie in the smooth locus of the fibration (Lemma \ref{lem:smoothlocus}) so that weak approximation is possible (by the local approach, i.e. the study of singular fibers).

Once we approximate the formal sections $\whts'$ and $\whts_0$, we take the line spanned by the degree $2$ multisection and the third intersection point with the cubic surface is what we need.

Both Swinnerton-Dyer \cite{SDCubic} and Madore \cite{MadoreCubic} have used the composition law of the cubic surface to study weak approximation on cubic surfaces. However, the basic strategy seems quite different. Their idea is to use a unirational parameterization to approximate $v$-adic points, which only works for places of good reduction. Our approach is to use the composition law to reduce the problem to a special and easier case, which can be proved via the deformation technique of Koll\'ar-Miyaoka-Mori \cite{KMM92RC}.

\textbf{Acknowledgments:} I would like to thank Tom Graber for many helpful discussions and for saving me from making many false statements, Chenyang Xu and Runpu Zong for their interest in the project, Letao Zhang for her help with the preparation of the manuscript, and the referees for so many helpful suggestions that have greatly improved the paper, both in its content and in its way of presentation.

\section{Preliminaries}

\subsection{Everything with a cyclic group action}
In this subsection we collect some useful results from \cite{WAIsotrivial}. Let $k$ be any field and $G$ a cyclic group of order $l$ such that $l$ is invertible in $k$.

First, we are concerned with the following infinitesimal lifting problem. Let $S$ and $R$ be $k$-algebras with a $G$-action and $f: S \rightarrow R$ be an algebra homomorphism compatible with the action. Let $A$ be an Artinian $k$-algebra with a $G$-action, $I \subset A$ an invariant ideal such that $I^2=0$. Consider the following commutative diagram, where $p$ is a $G$-equivariant $k$-algebra homomorphism.
\[
\begin{CD}
S@>{f}>>R\\
  @VVV		@VV{p}V\\
A@>\pi>>A/I@>>>0\\
\end{CD}
\]

We want to know when one can find a $G$-equivariant lifting $h:R\rightarrow A$. The following lemma completely answers this question.

\begin{lem}\label{lem-equiv-lifting}
If we can lift the map $p$ to a $k$-algebra homomorphism $h: R\rightarrow A$ such that $\pi\circ h=p$, then we can find an equivariant lifting $\tilde{h}:R \rightarrow A$ with the same property.
\end{lem}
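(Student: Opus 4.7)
The plan is to produce the equivariant lift by averaging over $G$, which works because $|G|=l$ is invertible in $k$. Starting from the given lift $h:R\to A$, for each $g\in G$ define a new $k$-algebra map $g\cdot h:R\to A$ by $(g\cdot h)(r)=g\cdot h(g^{-1}\cdot r)$. Each $g\cdot h$ is still a $k$-algebra homomorphism, and it is still a lift of $p$: since $p$ is $G$-equivariant, $\pi((g\cdot h)(r))=g\cdot p(g^{-1}\cdot r)=p(r)$. So we obtain $l$ lifts of $p$, and $h$ is equivariant precisely when $g\cdot h=h$ for all $g$.

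The next step is to observe that the set of $k$-algebra lifts of $p$ is naturally a torsor over the $k$-module $\mathrm{Der}_k(R,I)$, where $I$ is an $R$-module via $p$ (this is a standard consequence of $I^2=0$: if $h_1,h_2$ are two lifts, then $h_1-h_2$ lands in $I$ and is a $k$-derivation). In particular, any affine combination $\sum_i a_i h_i$ with $\sum_i a_i=1$ is again a lift: write it as $h_1+\sum_i a_i(h_i-h_1)$, a derivation perturbation of $h_1$. The group $G$ acts compatibly on both the torsor of lifts and on $\mathrm{Der}_k(R,I)$.

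Now set
\[
\tilde h \;=\; \frac{1}{l}\sum_{g\in G} g\cdot h.
\]
Since the coefficients sum to $1$ and each $g\cdot h$ is a lift of $p$, by the torsor observation $\tilde h$ is a $k$-algebra lift of $p$. A direct computation shows $g'\cdot \tilde h=\tilde h$ for every $g'\in G$: applying $g'$ to the definition just reindexes the sum by $g\mapsto g'g$. Hence $\tilde h$ is the desired $G$-equivariant lift.

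I do not anticipate a serious obstacle; the only point that needs some care is checking that the naive averaging yields an algebra homomorphism rather than merely a $k$-linear map, and this is handled by the torsor/derivation reformulation above. Equivalently, one can phrase the whole argument cohomologically: the failure of $h$ to be equivariant defines a $1$-cocycle $g\mapsto g\cdot h-h$ in $\mathrm{Der}_k(R,I)$, and $H^1(G,V)=0$ for any $k$-vector space $V$ with $G$-action once $|G|$ is invertible in $k$, giving $\tilde h$ as a coboundary correction of $h$.
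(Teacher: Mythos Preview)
Your proof is correct and follows essentially the same approach as the paper's: the paper phrases the argument cohomologically, defining the $1$-cocycle $\theta(g)=h_g-h$ in $\mathrm{Hom}(\Omega_{R/S},I)$ and invoking $H^1(G,-)=0$ when $|G|$ is invertible in $k$, while you carry out the explicit Reynolds averaging that underlies that vanishing (and you note this equivalence yourself at the end).
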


\begin{proof}
For every element $g$ in $G$, define a map $h_g:R\rightarrow A$ by $h_g(r)=g\cdot h(g^{-1}\cdot r)$. This is an $S$-algebra homomorphism and also a lifting of the map $p: R\rightarrow A/I$. The map $h$ is $G$-equivariant if and only if $h_g(r)=h(r)$ for every $g\in G$ and every $r \in R$. The difference of any two such liftings is an element in $Hom(\Omega_{R/S}, I)$, where $\Omega_{R/S}$ is the module of relative differentials. Therefore one has $\theta(g)(r)=h_g(r)-h(r)$ in $Hom(\Omega_{R/S}, I)$. Notice that $Hom(\Omega_{R/S}, I)$ is naturally a $G$-module with the action of $G$ on $Hom(\Omega_{R/S}, I)$ given by
\[
G \times Hom(\Omega_{R/S}, I) \rightarrow Hom(\Omega_{R/S}, I)
\]
\[
(g, \eta) \mapsto g\cdot\eta=(\omega \mapsto g\cdot\eta(g^{-1}\cdot \omega)).
\]

It is easy to check that
\[
\theta(gh)=g\cdot\theta(h)+\theta(g)
\]
Thus $\theta$ defines an element $[\theta]$ in $H^1(G,Hom(\Omega_{R/S}, I))$. The existence of an equivariant lifting is equivalent to the existence of an element $\Theta \in \textsl{Hom}(\Omega_{R/S}, I)$ such that $g \Theta-\Theta=\theta$, i.e, the class defined by $\theta$ is zero in $H^1(G,Hom(\Omega_{R/S}, I))$. Since the characteristic of the field is relatively prime to the order of $G$, all the higher cohomology groups $H^i(G, Hom(\Omega_{R/S}, I)), i \geq 1$ of $G$ vanish (\cite{weibel94}, Proposition 6.1.10, Corollary 6.5.9). The vanishing can be proved by the usual averaging argument.
\end{proof}

\begin{cor}\label{sm-equiv-lifting}
Let $X$ and $Y$ be two $k$-schemes with a $G$-action and $f: X \rightarrow Y$ be a finite type $G$-equivariant morphism. Let $x \in X$ be a fixed point, and $y=f(x)$ (hence also a fixed point). Assume that $f$ is smooth at $x$. Then there exists a $G$-equivariant section $s: \SP \whtOO_{y,Y}\rightarrow X$. In particular, assume that $Y$ is irreducible and the $G$ action on $Y$ is trivial. If there is a fixed point in $X$, then the set the fixed points of $X$ dominates $Y$.
\end{cor}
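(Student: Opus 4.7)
The plan is to construct the desired $G$-equivariant formal section order by order, appealing to Lemma \ref{lem-equiv-lifting} at each step to preserve equivariance. Set $R := \whtOO_{x,X}$ and $S := \whtOO_{y,Y}$ with their inherited $G$-actions, and let $f^{\ast}: S \to R$ be the pullback, which is $G$-equivariant since $x$ and $y$ are fixed points. A $G$-equivariant section $s: \SP S \to X$ through $x$ is the same data as a $G$-equivariant $S$-algebra retraction $h: R \to S$, which I would build as the inverse limit of compatible $G$-equivariant $S$-algebra maps $h_n: R \to S/\mfm^{n}$, where $\mfm$ denotes the maximal ideal of $S$.

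The base case $h_1: R \to S/\mfm = k(y)$ is the residue field projection $R \twoheadrightarrow R/\mfm_{x,X} = k(x) = k(y)$, which is $G$-equivariant because $x$ is fixed and which splits $f^{\ast}$ modulo $\mfm$. For the inductive step, assume $h_n$ has been constructed. Because $f$ is smooth at $x$, the map $f^{\ast}$ is formally smooth, so the standard infinitesimal lifting property produces some (a priori non-equivariant) $S$-algebra lift $R \to S/\mfm^{n+1}$ of $h_n$. Feeding this datum into Lemma \ref{lem-equiv-lifting} with $A = S/\mfm^{n+1}$ and $I = \mfm^n/\mfm^{n+1}$ upgrades it to a $G$-equivariant lift $h_{n+1}$, completing the induction. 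Passing to the inverse limit in $n$ then produces the required retraction $h$, and hence the section $s$.

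For the concluding sentence, a trivial $G$-action on $Y$ means that $\SP S$ carries the trivial $G$-action, so equivariance of $s$ forces its image to lie in the fixed locus $X^G$; thus $X^G$ dominates a formal neighborhood of $y = f(x)$, and in particular dominates $Y$. The only genuine content of the argument is the inductive lifting step, and the potential obstruction there — the non-equivariance of an ad hoc $S$-algebra lift — is exactly what Lemma \ref{lem-equiv-lifting} eliminates via the vanishing of group cohomology of $G$ in the coprime-to-characteristic setting; once the inductive framework is in place, nothing else needs checking.
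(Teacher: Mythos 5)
Your proof is correct and follows essentially the same route as the paper: start from the $G$-equivariant section at the closed point, use formal smoothness of $f$ at $x$ to produce an ad hoc lift modulo each power of $\mfm$, and invoke Lemma \ref{lem-equiv-lifting} at each step (with $A = S/\mfm^{n+1}$, $I = \mfm^n/\mfm^{n+1}$) to upgrade it to a $G$-equivariant lift before passing to the inverse limit. The paper phrases the induction in terms of sections $\SP(\whtOO_{y,Y}/\mfm_y^n) \to X$ rather than algebra retractions $R \to S/\mfm^n$, but these are the same data.
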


\begin{proof}
Let $S$ be the local ring at y, and $R$ be the local ring at x. There is an obvious $G$ action on both of these $k$-algebras. We start with the section $s_0:\SP k(y) \rightarrow f^{-1}(y), \SP k(y)\mapsto x$, which is clearly $G$-equivariant. By the smoothness assumption, a section from $\SP(\whtOO_{y,Y}/{\mfm_{y}^n})$ always lifts to a section from $\SP(\whtOO_{y,Y}/{{\mfm_y}^{n+1}})$. Now apply Lemma \ref{lem-equiv-lifting} inductively to finish the proof.
\end{proof}

We also need the following $G$-equivariant smoothing result, which is a slight generalization of the corresponding results in \cite{WAIsotrivial}. 
\begin{lem}\label{lem:EquivSmoothing}
Let $X$ be a smooth quasi-projective rationally connected variety over $\CC$ and $G$ be a cyclic group of order $l$ acting on $X$. Fix an action of $G$ on $\PP^1$ by $z \mapsto \zeta z$, where $\zeta$ is a primitive $l$-th root of unity. Assume that there is a very free rational curve through every point of $X$.
\begin{enumerate}
\item Let $f: \PP^1 \rightarrow X$ be a $G$-equivariant map. Then there exists a $G$-equivariant map $\tilde{f}:\p \rightarrow X$ such that $\tilde{f}(0)=f(0)$, $\tilde{f}(\infty)=f(\infty)$, and $\tilde{f}$ is very free. 
\item Let $f_i: C_i \rightarrow X, 1\leq i \leq n$ be a chain of equivariant maps, i.e. for each $i$, $C_i\cong \p$, and $f_i$ is a $G$-equivariant map such that $f_i(\infty)=f_{i+1}(0)$ for $1 \leq i \leq n-1$. Then there is a $G$-equivariant map $\tilde{f}: \p \rightarrow X$ such that $\tilde{f}(0)=f_1(0)$ and $\tilde{f}(\infty)=f_n(\infty)$.
\end{enumerate}
\end{lem}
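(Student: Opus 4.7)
The plan is to run Koll\'ar's standard comb-attachment-and-smoothing construction in a $G$-equivariant fashion: every deformation or lifting problem that can be solved non-equivariantly can be promoted to a $G$-equivariant solution by the argument of Lemma~\ref{lem-equiv-lifting}, because $|G|$ is invertible in $\CC$ and hence the higher group cohomology of $G$ with any module coefficients vanishes.

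For part~(1), first choose a point $t_{0}\in\PP^{1}\smallsetminus\{0,\infty\}$; its $G$-orbit consists of $l$ distinct points $q_{i}=\zeta^{i}t_{0}$, and $\{f(q_{i})\}$ is a $G$-orbit in $X$. By hypothesis pick a very free $g:\PP^{1}\to X$ with $g(0)=f(t_{0})$ and set $g_{i}(z):=\zeta^{i}\cdot g(z)$, so that $g_{i}(0)=f(q_{i})$. Gluing each $g_{i}$ to the handle $f$ at the node $q_{i}=0_{i}$ produces a $G$-equivariant comb $f^{\ast}:C^{\ast}\to X$ on which $G$ acts by $z\mapsto\zeta z$ on the handle and cyclically permutes the teeth. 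Now smooth the $G$-orbit of $l$ nodes equivariantly: the local smoothing parameters form a $G$-module $\AAA^{l}$ on which $G$ acts by cyclic permutation, and the $G$-invariant diagonal $\AAA^{1}$ yields a $G$-equivariant family $\mathcal{C}\to\AAA^{1}$ whose special fiber is $C^{\ast}$ and whose general fiber is a $\PP^{1}$ carrying the standard $z\mapsto\zeta z$ action; the handle points $0$ and $\infty$ propagate (by Corollary~\ref{sm-equiv-lifting}) as $G$-fixed sections of the family. Extending $f^{\ast}$ to $F:\mathcal{C}\to X$ subject to $F(0)=f(0)$, $F(\infty)=f(\infty)$ is unobstructed because the teeth are very free, and Lemma~\ref{lem-equiv-lifting} applied at each infinitesimal order makes the extension $G$-equivariant. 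The restriction $\tilde f$ to a general fiber is the desired equivariant map; very freeness is arranged by attaching enough $G$-orbits of very free teeth before smoothing.

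For part~(2), first apply (1) to each $f_{i}$ so that we may assume every $f_{i}$ is a very free $G$-equivariant map. The chain $C_{1}\cup\cdots\cup C_{n}$ is then a $G$-equivariant nodal curve glued at the $G$-fixed points $f_{i}(\infty)=f_{i+1}(0)$. At each such node the tangent characters on the two branches are $\zeta^{-1}$ (on $C_{i}$ at $\infty$) and $\zeta$ (on $C_{i+1}$ at $0$), so the local smoothing parameter $xy$ transforms trivially under $G$ and the node admits a $G$-equivariant smoothing. Smoothing the $n-1$ nodes produces a $G$-equivariant one-parameter family whose general fiber is a $\PP^{1}$ with the standard action and whose outer marked sections carry the points $f_{1}(0)$ and $f_{n}(\infty)$; the map to $X$ extends equivariantly by the same Lemma~\ref{lem-equiv-lifting} argument.

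The principal obstacle is bookkeeping the equivariance at the node-smoothing stage: one must check that the local smoothing parameter lies in the trivial isotypic component (i.e.\ that the product of tangent characters at each node is trivial), and that an entire $G$-orbit of nodes can be smoothed simultaneously along a single invariant direction. Once these local compatibilities are in place, extending the map to $X$ and arranging very freeness are automatic from standard comb-smoothing theorems, with equivariance coming for free from the vanishing of $H^{1}(G,-)$ invoked exactly as in Lemma~\ref{lem-equiv-lifting}.
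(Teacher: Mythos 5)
Your part (2) is essentially the paper's argument: make each $f_i$ very free, observe that the tangent characters at each node pair off so the smoothing parameter is $G$-invariant, and use the relative Hom-scheme together with Corollary~\ref{sm-equiv-lifting} (the paper states this via Construction \ref{KMM}-style blow-ups of $\PP^1\times T$). Note that you have the eigenvalues backwards (the tangent space at $0$ on $C_{i+1}$ has weight $\zeta$, at $\infty$ on $C_i$ has weight $\zeta^{-1}$, not the other way around), but the product is still trivial, so the conclusion stands. Also your ``$G$-invariant diagonal'' is slightly off: the $G$-module of smoothing parameters for an orbit of nodes is a \emph{twisted} permutation module (the local coordinate on the handle picks up a factor of $\zeta$ under the action), so the invariants are not the naive diagonal but a character-twisted averaging; there is still a one-dimensional invariant line, so this is a fixable slip.

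The substantive gap is in part (1), in the sentence ``Extending $f^{\ast}$ to $F:\mathcal{C}\to X$ subject to $F(0)=f(0)$, $F(\infty)=f(\infty)$ is unobstructed because the teeth are very free.'' Your strategy is: abstractly smooth the comb into a family $\mathcal{C}\to\AAA^1$, then extend the map over the family, invoking smoothness of $\mathrm{Hom}_{\AAA^1}(\mathcal{C}, X\times\AAA^1;\,0,\infty)\to\AAA^1$ at $[f^{\ast}]$. That smoothness requires $H^1(C^{\ast}, (f^{\ast})^{*}T_X(-0-\infty))=0$. Very freeness of the teeth kills the $H^1$ of the teeth components, but the exact sequence leaves you with $H^1(C^{\ast}, (f^{\ast})^{*}T_X(-0-\infty)) \cong H^1(C, f^{*}T_X(-0-\infty))$ on the handle, and $f^{*}T_X$ on the handle is unchanged by attaching teeth. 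Since $f$ is \emph{not} assumed very free — indeed $f$ can be a constant map, which the paper uses in Section 3.4, in which case $f^{*}T_X(-0-\infty)\cong\OO_{\PP^1}(-2)^{\oplus\dim X}$ and $H^1\neq 0$ — the unobstructedness claim fails. In short, you are importing the part-(2) mechanism (Hom-scheme smoothness) into part~(1), but it only works in part~(2) because there the $f_i$ are already very free.

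The paper avoids this by first replacing $X$ with $X\times\PP^M$ so that $f$ becomes an embedding and the deformation theory is governed by the \emph{normal sheaf} $\mcN_{D/X}$, which — unlike $f^*T_X$ — gains positivity on the handle with each attached tooth. The further, and essential, subtlety the paper handles is that it is not enough to have $H^1(\mcN_{D/X}(-0-\infty))=0$; to guarantee that a \emph{general $G$-invariant} section of $\mcN_{D/X}(-0-\infty)$ is nonvanishing at the nodes, one needs the stronger vanishing $H^1(D,\mcN_{D/X}(-0-\infty)\otimes L)=0$ with $L$ of degree $-l$ on the handle (invoking Lemma~2.5 of \cite{GHS03}), and then a character-averaging argument. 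Your argument bypasses this point entirely, but only because of the unjustified unobstructedness; if you repair that step by passing to the embedded picture, you will be forced to confront the same ``$G$-invariant sections must not all vanish at the nodes'' issue, and the twisted $H^1$-vanishing is the natural way to resolve it.
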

Recall that a quasi-projective complex variety is rationally connected if there is a rational curve through a general pair of points. The assumption that there is a very free rational curve through every point of $X$ is saying that $X$ is strongly rationally connected in the sense of Hassett-Tschinkel \cite{BadReduction}.
\begin{proof}
For part (1), we may assume that the equivariant map $f$ is an embedding and $\dim X\geq 3$ by replacing $X$ with $X\times \PP^M$ for some large $M$ and projecting deformations to the first factor $X$. Let $C$ be the image of the morphism $f$. 

We first attach very free curves $C_i$ at general points $p_i \in C$ along general tangent directions at $p_i$. Let $D_1$ be the nodal curve assembled in this way. By Lemma 2.5 \cite{GHS03}, after attaching enough such curves, the twisted normal sheaf $\mcN_{D_1/X}(-0-\infty)$ is globally generated and $H^1(D_1, \mcN_{D_1/X}(-0-\infty)\otimes L_1)=0$, where $L_1$ is any line bundle on $D_1$ which has degree $-l$ on $C$ and $0$ on all the other irreducible components $C_i$. 

We then attach all the curves that are $G$-conjugate to $C_i$'s (we may choose $C_i$'s such that the $G$-orbits do not intersect each other). The new nodal curve is denoted by $D$. The map $D \to X$ is $G$-equivariant. As in the previous paragraph, $D$ has the property that $H^1(D, \mcN_{D/X}(-0-\infty)\otimes L)=0$, where $L$ is any extension of the line bundle $L_1$ on $D$ which has degree $-l$ on $C$ and $0$ on all the attached rational tails. Denote by $R_j$ the rational curve attached to $C$ at the point $p_j$.

We have the following two exact sequences:
\[
0 \to \oplus_j \mcN_{D/X}(-0-\infty)|_{R_j}(-n_j) \to \mcN_{D/X}(-0-\infty) \to \mcN_{D/X}(-0-\infty)|_C \to 0,
\]
\[
0 \to \mcN_{C/X}(-0-\infty) \to \mcN_{D/X}(-0-\infty)|_C \to \oplus_j Q_j \to 0,
\]
where $n_j$'s are the nodal points on $R_j$, and $Q_j$'s are torsion sheaves supported on the points $p_j \in C$. Every sheaf has a natural $G$-action and the $G$-equivariant deformations are given by $G$-invariant sections of $\mcN_{D/X}$. To find a $G$-equivariant deformation smoothing all the nodes of $D$ and fixing $0$ and $\infty$, one just needs to find a $G$-invariant section in $H^0(D, \mcN_{D/X}(-0-\infty))^G$ which, for any $j$, is not mapped to $0$ under the composition of maps
\[
H^0(D, \mcN_{D/X}(-0-\infty)) \to H^0(C, \mcN_{D/X}(-0-\infty)|_C) \to Q_j
\]
for all $j$.

Since $H^1(D, \mcN_{D/X}(-0-\infty)\otimes L)=0$, we also have $$H^1(C, \mcN_{D/X}(-0-\infty)\otimes L\otimes \OO_C)=0.$$ Let $c_1, \ldots, c_l$ be an orbit of the $G$ action on $C$. By the vanishing of $H^1(C, \mcN_{D/X}(-0-\infty)\otimes \OO_C(-x_1-\ldots-x_l))$, the map
\[
H^1(C, \mcN_{D/X}(-0-\infty)\otimes \OO_C(-c_1-\ldots-c_{l-1}))\to  \mcN_{D/X}(-0-\infty)|_{c_l}
\]
is surjective. Thus there is a section of $\mcN_{D/X}(-0-\infty)|_C$ which vanishes on $c_1, \ldots, c_{l-1}$ but not on $c_l$. Then taking the average over $G$ gives a $G$-invariant section of $\mcN_{D/X}(-0-\infty)|_C$ which does not vanish on any of the points $c_1, \ldots, c_l$. In particular, for any $l$ nodes on $C$ which lie in a $G$-orbit, we can find a $G$-invariant section of $\mcN_{D/X}(-0-\infty)$ which does not vanish on them. Then a general $G$-invariant section of $\mcN_{D/X}(-0-\infty)|_C$ does not vanish on any of the nodes $p_i$. 

We have a surjection map $$H^0(D, \mcN_{D/X}(-0-\infty)) \to H^0(C, \mcN_{D/X}(-0-\infty)|_C)$$ and (consequently) $$H^0(D, \mcN_{D/X}(-0-\infty))^G \to H^0(C, \mcN_{D/X}(-0-\infty)|_C)^G$$ is also surjective. So a general $G$-invariant section in $H^0(D, \mcN_{D/X}(-0-\infty))^G$ does not vanish on the nodes. We take the $G$-equivariant deformation given by this section, which necessarily smooths all the nodes of $D$ with $0$ and $\infty$ fixed. A general smoothing is very free since the normal bundle is ample by upper-semicontinuity. 

For the second part, we may assume that all the $f_i$'s are very free by the first part. Let $f$ be the $G$-equivariant map obtained by gluing the $f_i$'s. 

Let $(T, o)$ be a pointed smooth curve with trivial $G$-action, and let $\tilde{\Sigma}$ be $\PP^1\times T$ with the natural diagonal action. There are two $G$-equivariant sections, $s_0=0 \times T, s_\infty=\infty \times T$. Now blow up the point $s_\infty(o)$ and still denote the strict transforms of the two sections by $s_0$ and $s_\infty$. The $G$-action extends to the blow-up. We can make the fiber over $o \in T$ a chain of rational curves with $n$ irreducible components by repeating this operation. Then we get a smooth surface $\Sigma$ with a $G$-action such that the projection to $T$ is $G$-equivariant.  

Let $h_0: s_0 \rightarrow X\times T$ and $h_\infty: s_\infty \rightarrow X \times T$ be $T$-morphisms such that $h_0(s_0)=f_1(0)\times T$ and $h_\infty(s_\infty)=f_n(\infty) \times T$. Consider the relative Hom-scheme $\text{Hom}_T(\Sigma, X\times T, h_0, h_\infty)$ parameterizing $T$-morphisms from $\Sigma$ to $X \times T$ fixing $h_0$ and $h_\infty$. It has a natural $G$ action and the map $\mu: \text{Hom}_T(\Sigma, X\times T, h_0, h_\infty) \rightarrow T$ is $G$-equivariant. Now $\mu$ is smooth at $f$. By Corollary \ref{sm-equiv-lifting}, there is a $G$-equivariant formal section. So there are $G$-equivariant smoothings of the morphism $f$.
\end{proof}

Finally we quote the following theorem from \cite{WAIsotrivial}. For our purpose, we only need to find equivariant rational curves in a few cubic surfaces, which, however, might be singular. Furthermore we want the curve to lie in the smooth locus, so the proof in \cite{WAIsotrivial} does not directly carry over. However it is good to know that such curve exists at least in a desingularization. We will discuss this problem in more detail later in \ref{equiv}.

\begin{thm}\label{thm:equivariant}
Let $X$ be a smooth projective rationally connected variety and let $G$ be a cyclic group of order $l$ with an action on $X$. Choose a primitive $l$-th root of unity $\zeta$ and let $G$ act on $\PP^1$ by $[X_0, X_1] \mapsto [X_0, \zeta X_1]$. Then for each pair $(x, y)$ of fixed points in $X$, there is a $G$-equivariant map $f: \PP^1 \to X$ such that $f(0)=x$ and $f(\infty)=y$.
\end{thm}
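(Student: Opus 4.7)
The plan is to build a single $G$-equivariant rational curve from $x$ to $y$ by first producing a $G$-invariant reducible configuration in $X$ with the prescribed endpoints and then smoothing it equivariantly, using the techniques developed in the proofs of Lemmas \ref{lem-equiv-lifting}--\ref{lem:EquivSmoothing}.

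First, using the rational connectedness of $X$, I would choose a very free (non-equivariant) rational curve $h \colon \PP^1 \to X$ with $h(0) = x$ and $h(\infty) = y$; such a curve exists by the standard comb-smoothing argument of \cite{GHS03}. Since $0, \infty \in \PP^1$ and $x, y \in X$ are all $G$-fixed, every translate $h_g(t) := g^{-1} \cdot h(g \cdot t)$ in the $G$-orbit again sends $0$ to $x$ and $\infty$ to $y$. I would then assemble the $|G|$ orbit curves into a $G$-invariant reducible curve $D \to X$ by identifying their $0$'s to a single point over $x$ and their $\infty$'s to a single point over $y$, and attach $G$-orbits of further very free comb teeth at general points of the $h_g$'s in order to secure the cohomological vanishing $H^1(D, \mcN_{D/X}(-0-\infty) \otimes L) = 0$ for a suitable twist $L$, exactly as in the proof of Lemma \ref{lem:EquivSmoothing}(1).

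With this vanishing in hand, a classical smoothing section of $\mcN_{D/X}(-0-\infty)$ exists, and since $|G|$ is invertible in $\CC$ the higher $G$-cohomology vanishes, so $G$-averaging yields a $G$-invariant section nonvanishing at every node; the key genericity argument of Lemma \ref{lem:EquivSmoothing}(1) survives averaging because $G$-orbits of generic points are still generic. Invoking Corollary \ref{sm-equiv-lifting} on the relative Hom scheme of deformations of $D$ then upgrades this first-order equivariant smoothing to an actual $G$-equivariant algebraic family, whose generic fiber is the desired irreducible equivariant map $\PP^1 \to X$ with $f(0) = x$ and $f(\infty) = y$.

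The main obstacle is that the two special points of $D$ lying over $x$ and $y$ are not ordinary nodes but rather stars at which all $|G|$ branches meet simultaneously; the standard smoothing theory underlying Lemma \ref{lem:EquivSmoothing} is written for pairwise nodes. I would circumvent this by first performing, $G$-equivariantly, a local deformation at each star in the Hilbert scheme of curves in $X$ to separate the branches into $|G|-1$ ordinary nodes, reducing the global smoothing step to the nodal situation already handled. A secondary technical point, flagged by the remark preceding the theorem, is that for the application in this paper one must additionally ensure the smoothed equivariant curve lies in the smooth locus of the ambient (possibly singular) total space; this is handled separately by modifying the choice of $h$ and the attached combs so that they avoid the singular locus, which is possible precisely because very freeness is an open condition.
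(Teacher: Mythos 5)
Your proposed configuration $D$ cannot work as the base of the smoothing argument, because it is not a curve of arithmetic genus $0$. Taking $|G|$ copies of $\PP^1$ (the translates $h_g$) and identifying all of their $0$'s to a single point over $x$ and all of their $\infty$'s to a single point over $y$ produces a ``banana'' configuration with dual graph having two vertices joined by $|G|$ edges; its arithmetic genus is $|G| - 1$. Attaching $G$-orbits of comb teeth (which are trees) leaves this unchanged. When you then smooth the nodes of $D$, the arithmetic genus is preserved in the flat family, so the general fiber is a smooth curve of genus $|G| - 1$, not a $\PP^1$. The conclusion that ``the generic fiber is the desired irreducible equivariant map $\PP^1 \to X$'' is therefore false for $|G| > 1$. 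Your proposed remedy --- equivariantly separating the two ``star'' points into chains of $|G|-1$ ordinary nodes each --- does not fix this: the resulting nodal curve still has $|G|$ components and $2(|G|-1)$ nodes, hence still has arithmetic genus $|G|-1$. To bring the genus down to $0$ you would have to break the cycle, i.e.\ disconnect the branches at one of the two stars; but the $G$-action permutes those branches, so you cannot equivariantly single one out to be the ``surviving'' branch through $y$.

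The degenerations that are actually in the boundary of the space of $G$-equivariant maps $\PP^1 \to X$ (with the prescribed $\PP^1$-action) and hence smooth to what you want are of a different shape: either chains of $\PP^1$'s, each carrying the cyclic $G$-action $z \mapsto \zeta^a z$ and glued at $G$-fixed points that map to $G$-fixed points of $X$, or combs in which the handle is $G$-invariant and the teeth occur in $G$-orbits attached at orbits of non-fixed points. This is precisely what Lemma~\ref{lem:EquivSmoothing} is set up to handle, and the discussion in~\ref{equiv} indicates the intended strategy: start from the constant $G$-equivariant map at $x$ (which is trivially equivariant), use part~(1) of Lemma~\ref{lem:EquivSmoothing} to deform it to a very free $G$-equivariant loop at $x$, move the endpoint $\infty$ along the fixed locus via an equivariant relative Hom-scheme and Corollary~\ref{sm-equiv-lifting}, do the same from $y$, and finally concatenate and smooth via part~(2). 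You should rebuild your argument around that kind of configuration; the banana cannot be the degenerate curve.
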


\subsection{Iterated blow-up}
Let $\pi: \mcX \to C$ be a flat proper family over a smooth projective connected curve $C$. Let $c \in C$ be a closed point and $\whts_0: \SP \whtOO_{c, C} \to \mcX$ be a formal section. Assume that $\whts_0$ lies in the smooth locus of $\mcX \to C$. The $N$-th iterated blow-up associated to $\whts_0$ is defined inductively as follows.

The $0$-th iterated blow-up $\mcX_0$ is $\mcX$ itself. Assume the $i$-th iterated blow-up $\mcX_i$ has been defined. Let $\whts_i$ be the strict transform of $\whts_0$ in $\mcX_i$. Then $\mcX_{i+1}$ is defined as the blow-up of $\mcX_i$ at the point $\whts_i(c)$.

We remark that if both $\mcX$ and $C$ have a $G$-action such that
\begin{itemize}
\item
the map $\pi: \mcX \to C$ is $G$-equivariant.
\item
The point $c$ is the fixed point of $G$ and $\whts_0$ is $G$-equivariant,
\end{itemize}
then each $\mcX_i$ has a $G$-action such that the natural morphisms $\mcX_{i+1} \to \mcX_i$ and the formal sections $\whts_i$ are $G$-equivariant. In particular, the intersection of $\whts_i$ with the central fiber is a fixed point of $G$.

One can also do this at fibers over a $G$-orbit in $C$, provided the formal sections over these points are conjugate to each other under the $G$-action. Then the iterated blow-up still has a $G$-action and every morphism is compatible with the action.

On $\mcX_N$, the fiber over the point $c$ consists of the strict transform of $\mcX|_c$ and exceptional divisors $E_1, \ldots, E_N$, and
\begin{itemize}
\item
$E_i, i = 1, \ldots, N-1$, is the blowup of $\PP^d$ at $r_i (=\whts_i(c))$, the point where
the proper transform of $\whts_0$ (i.e. $\whts_i$) meets the fiber over $c$ of the $(i-1)$-th iterated blow-up;
\item
$E_N \cong \PP^d$,
\end{itemize}
where $d$ is the dimension of the fiber.

The intersection $E_i \cap E_{i+1}$ is the exceptional divisor $\PP^{d-1}\subset E_{i}$, and a proper transform of a hyperplane in $E_{i+1}$, for $i=0, \ldots, N-1$.

Furthermore, to find a section agreeing with $\whts_0$ to the $N$-th order is the same as finding a section in $\mcX_{N+1}$ intersecting the fiber over $c$ at $E_{N+1}$, or equivalently, a section in $\mcX_N$ which intersects the exceptional divisor $E_N$ at the point $r_N=\whts_N(c)$ (Proposition 11, \cite{HT06}).

\section{Standard models of cubic surfaces over a Dedekind domain}
\subsection{Standard models}\label{Corti}
Corti \cite{CortiCubic} developed a theory of standard models of cubic surfaces over Dedekind domains. Let $C$ be a smooth projective connected curve and $p$ a point in $C$. Denote by  $\OO$ the spectrum of $\OO_{C, p}$ or $\whtOO_{C, p}$ and $K$ the quotient field of $\OO_{C, p}$ or $\whtOO_{C, p}$. Let $X_K$ be a cubic surface defined over $K$. A model of $X_K$ over $\OO$ is a flat projective family $X_{\OO}$ such that the generic fiber is $X_K$.

\begin{defn}
A standard model of $X_K$ over $\OO$ is a model $X_{\OO}$ over $\OO$ such that
\begin{enumerate}
\item $X_{\OO}$ has terminal singularities of index $1$.
\item The central fiber $X_0$ is reduced and irreducible.
\item The anticanonical system $-K_{X_\OO}$ is very ample and defines an embedding $X_\OO \subset \PP^3_\OO$.
\end{enumerate}
\end{defn}

The main theorem of \cite{CortiCubic} is the following.
\begin{thm}\cite{CortiCubic}
A standard model exists over $\OO$.
\end{thm}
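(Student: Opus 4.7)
The plan is to construct $X_\OO$ by running a relative minimal model program starting from any convenient projective model and terminating at one which automatically satisfies the three axioms.

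\emph{Starting model and relative MMP.} Begin with a projective model, for concreteness the scheme-theoretic closure of $X_K \hookrightarrow \PP^3_K$ inside $\PP^3_\OO$. By resolution of singularities for excellent threefolds (combined with semistable reduction in the central fiber) this can be dominated by a regular model $Y \to \SP \OO$ with reduced simple normal crossings central fiber. Now run a $K_Y$-MMP over $\SP \OO$. Because $-K_{X_K}$ is ample on the smooth del Pezzo generic fiber, no $K$-negative extremal ray on $Y$ can be represented by a curve dominating $\SP\OO$; every extremal ray is vertical, so the program alters only the central fiber. After contracting $K$-negative extremal rays and performing flips until $-K$ becomes relatively nef, one further extremal contraction -- necessarily a Mori fibration -- yields a model $X^\sharp \to \SP\OO$ of relative Picard rank $1$, with terminal singularities, and with $-K_{X^\sharp}$ relatively ample.

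\emph{Verifying the axioms.} Relative Picard rank $1$ together with the Gorenstein property of the generic fiber forces $K_{X^\sharp}$ to be Cartier, so the terminal singularities have index $1$, giving axiom (1). The central fiber must be irreducible (by relative Picard rank $1$ and irreducibility of the generic fiber) and reduced (by Cohen--Macaulayness of the Gorenstein terminal threefold $X^\sharp$), giving axiom (2). For axiom (3), apply relative Kawamata--Viehweg vanishing to the relatively ample divisor $-K_{X^\sharp}$ on the Gorenstein terminal threefold $X^\sharp$: this gives $R^i\pi_*\OO_{X^\sharp}(-K_{X^\sharp})=0$ for $i>0$, and cohomology and base change identify $\pi_*\OO_{X^\sharp}(-K_{X^\sharp})$ with a free $\OO$-module of rank $h^0(X_K, -K_{X_K})=4$. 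Trivialising this module and writing down the associated relative complete linear system embeds $X^\sharp$ as a hypersurface in $\PP^3_\OO$ of fiberwise degree $3$, which is (3).

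\emph{Main obstacle.} The real difficulty lies inside the MMP: one must know that termination of flips holds in this three-dimensional setting over a DVR, and one must argue that the final step truly is a Mori fibration onto $\SP\OO$ rather than a divisorial contraction collapsing a component of the central fiber onto a subvariety of itself. The former relies on the threefold termination results of Mori, Shokurov and others in the setting of a DVR base; the latter requires keeping track of the relative Picard number of the central fiber throughout the program, using that every component of the central fiber is vertical, to ensure that exactly one component survives at the Mori-fibration step. A secondary subtlety is checking that during the MMP we never leave the Gorenstein world for long enough to lose the index $1$ condition at the end, which is precisely what is guaranteed by the generic fiber being smooth and relative Picard rank dropping to $1$.
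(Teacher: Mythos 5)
This theorem is taken from Corti's paper and is stated here without proof, so there is no argument in the text to compare yours against; what follows judges your sketch on its own terms and against what Corti actually does.

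Your sketch correctly identifies the right framework --- running the relative $K$-MMP over $\SP\OO$ starting from a regular model with snc central fiber and landing on a Mori fiber space --- and this is indeed the starting point of Corti's argument. However, the step where you pass from ``Mori fiber space with terminal singularities'' to ``standard model'' contains the real content of Corti's theorem, and you dismiss it. The claim that ``relative Picard rank $1$ together with the Gorenstein property of the generic fiber forces $K_{X^\sharp}$ to be Cartier'' is simply false: the output of the relative MMP is a terminal threefold, and threefold terminal singularities in general have index $r>1$ (e.g.\ cyclic quotient points $\tfrac{1}{r}(1,a,r-a)$); nothing about a smooth Gorenstein generic fiber or $\rho(X^\sharp/\OO)=1$ rules these out at points of the closed fiber. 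Establishing that one can actually arrive at a model with \emph{index $1$} terminal singularities --- and this is known to fail for degree $\leq 2$ del Pezzo surfaces over a DVR, which is why Corti's standard models for those degrees are defined differently --- is the technical heart of the paper for the cubic case, requiring a detailed local classification of the possible Mori fiber space outputs and explicit birational surgeries (flops, unprojections, weighted blowups) to kill the index. Your ``main obstacle'' paragraph names termination of flips and the Mori-fibration step, but those are the routine parts; the Gorenstein issue you wave away in the last sentence is the genuine one.

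A second, smaller gap: reducedness of the central fiber does not follow from Cohen--Macaulayness. CM of $X^\sharp$ plus $X_0$ being Cartier gives $S_1$ for $X_0$ (no embedded points), but says nothing about generic reducedness: a Mori fiber space over a DVR can a priori have a multiple central fiber $X_0=mD$, and $\rho(X^\sharp/\OO)=1$ only gives irreducibility of the support, not $m=1$. Ruling this out is again part of Corti's case analysis, not a formal consequence of the MMP output. If you want to quote this result, the honest thing is simply to cite \cite{CortiCubic} as the paper does; if you want to reprove it, you would need to reproduce the index-reduction and multiple-fiber analysis, which for cubics occupies most of the relevant section of Corti's paper.
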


Gluing local models together one gets a standard model over the curve $C$.

Note that in dimension $3$ terminal singularities of index $1$ are isolated and have multiplicity $2$.

For the singularities of the central fiber, we have the following, proved in \cite{WallCubic}.
\begin{lem}[ \cite{WallCubic}]
An integral cubic surface is either a cubic surface with du Val (=ADE) singularities, or a cone over an irreducible, possibly singular, plane cubic curve or a non-normal surface with only multiplicity $2$ singularities (along a line).
\end{lem}

To prove this lemma, one can look at the cases of singular cubic surfaces listed in \cite{WallCubic}. Note that an integral cubic surface is normal if and only if it has isolated singularities. In the list of \cite{WallCubic}, classes (A)-(C) corresponds to cubic surfaces with du Val singularities, class (D) the cone over a smooth plance cubic, class (E) the case of a non-normal cubic surface which is not a cone, and class (F) the cone over a nodal or cuspidal plane cubic.

In the following, we will analyze the local structure of the standard model over the formal neighborhood $\mcX \to \SP \CC[[t]]$ in the last $2$ cases.

The main result is the following.

\begin{prop}\label{prop:basechange}
Let $\mcX$ be a standard model over $\SP \CC[[t]]$ whose central fiber does not have du Val singularities. Then after a ramified base change $t=r^l$ and a birational modification, we get a new family $\mcX' \to \SP \CC[[r]]$ whose central fiber has du Val singularities. Furthermore, the Galois group $G \cong \ZZ/l\ZZ$ acts on the new total space and the projection $\mcX' \to \SP \CC[[r]]$ is $G$-equivariant.

Moreover, formal sections of the family $\mcX$ contained in the smooth locus induce $G$-equivariant formal sections of $\mcX' \to \SP\CC[[r]]$ contained in the smooth locus.

Finally, given two formal sections of the family $\mcX$ intersecting the central fiber in the smooth locus, let $x, y$ be the intersection points of the corresponding new $G$-equivariant sections with the new central fiber. Then $x, y$ are fixed points (under the $G$ action) in the smooth locus of the central fiber $\mcX'_0$ of the new family. Moreover, there is a $G$-equivariant map $f: \PP^1 \to \mcX'_0$ such that $f(0)=x, f(\infty)=y$ and the image of $f$ lies in the smooth locus of $\mcX'_0$. We may also assume that $f$ is very free.
\end{prop}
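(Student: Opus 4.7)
The assertion consolidates the explicit computations of \ref{basechange1}, \ref{basechange2}, and \ref{equiv}; I outline the strategy.

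For parts (1)--(3), the plan is a case analysis based on the classification of central fibers in a standard model recalled in \ref{Corti}. Since $\mcX_0$ is not du Val, it is either a cone over an irreducible plane elliptic curve or one of Wall's two non-normal, non-cone normal forms. In each case I would examine which monomials $t^{a} X_{0}^{b_0} X_1^{b_1} X_2^{b_2} X_3^{b_3}$ occur in the cubic defining $\mcX$, identify the lowest-weight monomial responsible for the worst singularity, and assign weights to $(X_0, X_1, X_2, X_3, r)$ so that after the substitution $t = r^l$ and the induced change of variables $X_i = r^{w_i} Y_i$ the lowest-weight piece becomes the new central fiber. Applying Bertini to the pencil of level sets of the quasi-homogeneous polynomial then reduces the singularity analysis to that of the underlying plane elliptic curve, giving a du Val central fiber in each case. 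The $\ZZ/l\ZZ$-action on $\mcX'$ is induced by the Galois action on $\SP\CC[[r]]$ combined with the quasi-homogeneous rescaling, and a formal section $\whtt$ of $\mcX$ induces a $G$-equivariant section $\whtr$ of $\mcX'$ by the same substitution. The image point of $\whtr$ in $\mcX'_0$ is read off from this substitution, and one checks directly that if $\whtt$ met $\mcX_0$ in the smooth locus then $\whtr$ meets $\mcX'_0$ in its smooth locus.

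For part (4), both intersection points $x$ and $y$ lie in $(\mcX'_0)^{\mathrm{sm}}$ by part (3), and they are $G$-fixed because the $G$-action on a section induced from a non-equivariant section of $\mcX$ is trivial. To construct a $G$-equivariant very free rational curve from $x$ to $y$ inside $(\mcX'_0)^{\mathrm{sm}}$, I would apply Lemma \ref{lem:EquivSmoothing} to the quasi-projective smooth locus. The hypothesis that there is a very free curve through every point of this smooth locus is supplied by \cite{XuSRC} (or Theorem 21 of \cite{BadReduction}) since the du Val cubic $\mcX'_0$ is log del Pezzo. The construction then has three steps: first produce a $G$-equivariant very free map $f_x : \PP^1 \to (\mcX'_0)^{\mathrm{sm}}$ with $f_x(0) = f_x(\infty) = x$ by attaching $G$-orbits of very free curves at $x$ and smoothing equivariantly via part (1) of Lemma \ref{lem:EquivSmoothing}; next, use Corollary \ref{sm-equiv-lifting} applied to the relative Hom-scheme over $T = (\mcX'_0)^{\mathrm{sm}}$ (with sections $s_0 \equiv x$ and $s_\infty = \mathrm{id}_T$) to deform the endpoint $f_x(\infty)$ equivariantly to a general point $z$ while keeping $0 \mapsto x$; do the same for $y$, obtaining a $G$-equivariant curve from a general point to $y$. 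Gluing the two curves at $z$ and applying part (2) of Lemma \ref{lem:EquivSmoothing} yields the desired $G$-equivariant very free rational curve from $x$ to $y$ inside the smooth locus.

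The main obstacle is keeping the whole construction inside $(\mcX'_0)^{\mathrm{sm}}$ rather than in a resolution, which is precisely why Theorem \ref{thm:equivariant} cannot be cited verbatim (as flagged in \ref{equiv}). Xu's theorem on very free curves through smooth points of log del Pezzo surfaces is exactly what licenses re-running the proof of Lemma \ref{lem:EquivSmoothing} on the open smooth locus, since that proof only uses the existence of very free curves through each point as input. The non-normal, non-cone case goes through identically once the base change of \ref{basechange2} has been used.
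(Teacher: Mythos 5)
Your plan for the base-change and birational-modification steps is the same weighted rescaling analysis that the paper carries out case by case in \ref{basechange1} and \ref{basechange2}, and your reading is correct; so is the observation that the intersection points of the induced $G$-equivariant formal sections are $G$-fixed in the new central fiber, since those sections are $G$-equivariant and the central fiber lies over the unique $G$-fixed point of $\SP \CC[[r]]$. The overall strategy for the last assertion (apply Lemma \ref{lem:EquivSmoothing} to the smooth locus of $\mcX'_0$, with the very-free input supplied by \cite{XuSRC} or Theorem 21 of \cite{BadReduction}) also matches \ref{equiv}.

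There is, however, a genuine error in your step (4). You run the relative Hom-scheme construction over $T$ equal to the full smooth locus of $\mcX'_0$, with $s_\infty=\mathrm{id}_T$, and then propose to ``deform the endpoint $f_x(\infty)$ equivariantly to a general point $z$.'' This cannot work as written. For a $G$-equivariant map $\tilde f\colon \PP^1 \to \mcX'_0$ with the standard $G$-action on $\PP^1$, both $\tilde f(0)$ and $\tilde f(\infty)$ are forced to be $G$-fixed points, because $0$ and $\infty$ are fixed on $\PP^1$. A general point of the smooth locus of $\mcX'_0$ is not $G$-fixed: in the cone case, for instance, the fixed locus of $G$ on $\mcX'_0=\{Y_0^3+F(Y_1,Y_2,Y_3)=0\}$ is exactly the plane elliptic curve $C=\{Y_0=0\}\cap\mcX'_0$. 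So no $G$-equivariant curve from $x$ to such a $z$ exists, and the gluing point needed to invoke Lemma \ref{lem:EquivSmoothing}(2) would likewise fail to be fixed. Moreover, Corollary \ref{sm-equiv-lifting} only yields that the set of $G$-fixed points of the Hom-scheme dominates $T$ when $G$ acts trivially on $T$, which fails for your choice. The paper's fix in \ref{equiv} is to take $T$ to be the smooth locus of the fixed curve $C$ itself (respectively the fixed line $\{Y_0=Y_1=0\}$ in the non-normal case); then $G$ acts trivially on $T$, both intersection points $x,y$ lie on $C$ (as established in \ref{basechange1}, \ref{basechange2}), the common general point $z$ is a general point of $C$, and the rest of your outline --- attach $G$-orbits of very free curves, smooth equivariantly via Lemma \ref{lem:EquivSmoothing}(1), then glue at $z$ and apply Lemma \ref{lem:EquivSmoothing}(2) --- goes through.
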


The proof of this proposition will be given in the following subsections. The first two consist of explicit computations of the base change and the correspondences between the sections. The last one proves the existence of equivariant very free curves in the smooth locus.

\subsection{Base change computation I: Cone over a plane cubic} 
\label{basechange1}

In the following two subsections, we will always denote the defining polynomial of the family as $H(t, X_0, X_1, X_2, X_3)$, which is a formal power series in $t$ with coefficients in $X_0, \ldots, X_3$. 

\begin{conv}
We say that a monomial $M(t, X_0, \ldots, X_3)$ is in the defining polynomial $H$ if after taking the power series expansion of $H$,  it appears as a monomial in $H$.
\end{conv}

Assume the central fiber is a cone over an irreducible plane cubic curve, defined by equation $F(X_1, X_2, X_3)=0$. Then the total space has multiplicity $3$ at vertex $[1, 0, 0, 0]$ unless we have $t X_0^3, t^2 X_0^3$, or $t X_0^2 X_i, i=1, 2, 3$ in the defining equation of the family. 

We first discuss how to find a ramified base change $t=r^n$ for some $n$ and a birational modification so that the central fiber has at worst du Val singularities.

Case (1): If $tX_0^3$ is contained in the defining polynomial $H$, we assign weight $(0, 1, 1, 1, 1)$ to $(X_0, X_1, X_2, X_3, r)$. Then the homogeneous polynomial $F$ has weight $3$. We make a degree $3$ base change $t=r^3$. There is exactly one more monomial with weight $3$ in $H(r^3, X_0, \ldots, X_3)$, namely $r^3X_0^3$ and the other monomials all have weight strictly greater than $3$. So after a degree $3$ base change $t=r^3$ and change of variables
\[
Y_0=X_0, rY_1= X_1, rY_2=X_2, rY_3=X_3,
\]
the new defining equation has the form
\[
r^3(Y_0^3+F(Y_1, Y_2, Y_3))+r^{\geq 4}R(Y_0, Y_1, Y_2, Y_3, r)=0,
\]
or equivalently,
\[
Y_0^3+F(Y_1, Y_2, Y_3)+r^{\geq 1}R(Y_0, Y_1, Y_2, Y_3, r)=0.
\]

The new family $\mcX' \to \SP \CC[[r]]$ has a $\ZZ/3 \ZZ$ action on the total space compatible with Galois group action on $\SP \CC[[r]]$. The central fiber of the new family is
\[
Y_0^3+F(Y_1, Y_2, Y_3)=0.
\]

Taking partial derivative with respect to $Y_0$ shows that the only possible singularities of the new central fiber lie in the plane $Y_0=0$ and come from singularities of the elliptic curve. Such singularities are isolated and have multiplicity $2$, and thus are du Val singularities. Furthermore, the singularities of $\mcX'$ lie in the curve $Y_0=F(Y_1, Y_2, Y_3)=0$.

Case (2): If $tX_0^3$ is not contained in the defining polynomial $H$, and $t X_0^2 X_i$ (for some $i=1, 2, 3$) is contained in $H$, we again assign weight $(0, 1, 1, 1, 1)$ to $(X_0, X_1, X_2, X_3, r)$. Then the homogeneous polynomial $F$ has weight $3$. One can make a degree $2$ base change $t=r^2$, and the change of variables
\[
Y_0=X_0, r Y_1= X_1, r Y_2=X_2, r Y_3=X_3.
\]
After a linear change of coordinates, one can write the equation for the new central fiber as
\[
Y_0^2Y_1+F'(Y_1, Y_2, Y_3)=0,
\]
where $Y_0=F'(Y_1, Y_2, Y_3)=0$ defines an irreducible plane cubic $C$. 

The singularities (if they exist) of the new central fiber are defined by equations
\[
Y_0Y_1=Y_0^2+\frac{\partial F'}{\partial Y_1}=\frac{\partial F'}{\partial Y_2}=\frac{\partial F'}{\partial Y_3}=0.
\]
They are of two kinds. One possible singularity lies in the plane $Y_0=0$ and comes from singularities of the plane cubic $C$. The other singularities lie in the plane $Y_1=0$. If $\frac{\partial F'}{\partial Y_1}$ vanishes at the singularities, then so does $Y_0$. Thus these singularities belong to the previous kind. If $\frac{\partial F'}{\partial Y_1}$ does not vanish at the singularities, then $Y_0$ is non-zero and $Y_1=0$ is tangent to the curve $C$ at a smooth point. There are two singularities of the second kind coming from two solutions of the equation $Y_0^2+\frac{\partial F'}{\partial Y_1}=0$, which satisfies the property $Y_0\neq 0$, and the two singular points are conjugate to each other under the $\ZZ/2\ZZ$ action. Again the new central fiber has du Val singularities only.

Case (3): If neither $t X_0^3$ nor $t X_0^2 X_i, i=1, 2, 3$ is contained in $H$, then $t^2X_0^3$ has to be contained in $H$. So one can make a degree $3$ base change $t=r^3$ and change of variables
\[
Y_0=X_0, r^2Y_1= X_1, r^2Y_2=X_2, r^2Y_3=X_3.
\]
Then the central fiber is
\[
Y_0^3+F(Y_1, Y_2, Y_3)=0,
\]
which has du Val singularities only.

Finally, a formal section $\whtt: \SP \CC[[t]] \to \mcX$ induces a $G$-equivariant formal section $\whtr: \SP \CC[[r]] \to \mcX'$. Let $[a, b, c, d]$ be the intersection of $\whtt$ with the central fiber. Assume it is a smooth point. So in particular, one of $b, c, d$ is non-zero. Then the induced section $\whtr$ intersects the new central fiber at $[0, b, c, d]$, which lies in the plane elliptic curve $C=\{Y_0=F(Y_1, Y_2, Y_3)=0\}$ or $\{Y_0=F'(Y_1, Y_2, Y_3)=0\}$. Moreover, the point $[0, b, c, d]$ is a smooth point of the elliptic curve, otherwise the point $[a, b, c, d]$ is a singular point of the surface $F(X_1, X_2, X_3)=0$ since it lies in the line spanned by the singular point in the curve $C$ and the vertex $[1, 0, 0, 0]$.

\subsection{Base change computation II: Non-normal and not a cone}\label{basechange2}
When the central fiber is non-normal but not a cone, by \cite{WallCubic}, p. 252, case E, the equation of the surface can be uniquely written as
\[
X_0X_2^2+X_1 X_3^2=0,
\]
or
\[
X_0 X_2^2+X_1 X_2 X_3 +X_3^3=0.
\]

In both cases the singular locus is the line $X_2=X_3=0$. Since the total space is smooth along the generic point of the line, we have a term $tF(X_0, X_1)$ in the defining polynomial $H$. 

In the first case, make a degree $2$ base change $t=r^2$ and change of variables
\[
Y_0= X_0, Y_1= X_1, rY_2=X_2, rY_3=X_3,
\]
one can get a new family $\mcX' \to \SP \CC[[r]]$, together with a $\ZZ/2\ZZ$ action on the total space compatible with action $r \mapsto -r$. The central fiber of the family is defined by
\[
F(Y_0, Y_1)+Y_0 Y_2^2+Y_1Y_3^2=0,
\]
which has only du Val singularities. One has a blow-up/blow-down description of this change of variables similar to the previous case. As in the the previous case, a formal section $\whtt$ induces a $G$-equivariant formal section $\wht{r}:\SP \CC[[r]] \to \mcX'$. If the original section intersects the central fiber in the smooth locus (i.e. one of the coordinates $X_2$ or $X_3$ is non-zero), then the new formal section $\wht{r}$ intersects the new central fiber in the line $Y_0=Y_1=0$, which lies in the smooth locus.

In the second case, we need to make different base changes and list them as follows.
\begin{enumerate}
\item If $tX_0^3$ is contained in the defining polynomial $H$, then make a degree $6$ base change $t=r^6$.
\item If $tX_0^3$ is not in $H$ and $tX_0^2X_1$ is in $H$, then make a degree $5$ base change $t=r^5$.
\item If neither $tX_0^3$ nor $tX_0^2X_1$ is in $H$ but $t X_0 X_1^2$ is in $H$, then make a degree $4$ base change $t=r^4$.
\item If none of $tX_0^3, tX_0^2X_1, t X_0 X_1^2$ are in $H$, and $tX_0^2X_3$ is in $H$, then make a degree $3$ base change $t=r^3$.
\item If none of $tX_0^3, tX_0^2X_1, t X_0 X_1^2$ are in $H$, but $tX_0^2X_3$ is in $H$, then make a degree $4$ base change $t=r^4$.
\end{enumerate}

After the base change, make the following change of variables
\[
Y_0=X_0, r Y_1=X_1, r^3 Y_2=X_2, r^2 Y_3=X_3.
\]
After the base change and change of variables, the central fiber has the form
\[
Y_0 Y_2^2+Y_1 Y_2 Y_3 +Y_3^3+G=0,
\]
where $G$ is one of the polynomials $Y_0^3, Y_0^2Y_1, a Y_0 Y_1^2+b Y_0^2Y_3, Y_1^3+c Y_0^2Y_2$.  This defines a cubic surface with at worst du Val singularities.  

A formal section $\whtt$ induces a new formal section $\wht{r}: \SP \CC[[r]] \to \mcX'$. If the original section $\wht{t}$ intersects the central fiber in the smooth locus (i.e. the coordinate $X_2$ is non-zero), then the new formal section $\wht{r}$ intersects the new central fiber at the point $[0, 0, 1, 0]$, which is a smooth point of the new central fiber.

\subsection{Equivariant curves}\label{equiv}

We first show the following.

\begin{lem}\label{lem:equiv}
Let $X$ be a smooth quasi-projective variety with an action of a finite cyclic group $G$ of order $l$, and let $T$ be an irreducible component of the fixed point loci of $G$. Assume that there is a very free curve thorough every point of $X$. Fix a $G$-action on $\PP^1$ by $[X_0, X_1] \mapsto [X_0, \zeta X_1]$, where $\zeta$ is a primitive $l$-th root of unity. Then given any two fixed points $x, y$ in $T$, there is a $G$-equivariant very free curve $f: \PP^1 \to X$ connecting $x$ and $y$.
\end{lem}

\begin{proof}
Given a point $x$ in $T$, the constant map $\PP^1 \to x$ is $G$-equivariant. By assumption, there is a very free rational curve in the smooth locus and passing through that point $x$. Then Lemma \ref{lem:EquivSmoothing}, (1), applied to $X$, shows that there is a very free $G$-equivariant rational curve mapping $0$ and $\infty$ to $x$. 

We can deform the curve with $0$ mapped to $x$ in a $G$-equivariant way to get a very free $G$-equivariant map connecting $x$ and a general point in $T$. To do this, first define two morphisms 
\[
s_0: 0 \times T \subset \PP^1 \times T \to X \times T
\]
\[
(0, t) \mapsto (x, t)
\]
and 
\[
s_\infty: \infty \times T \subset \PP^1 \times T \to X\times T
\]
\[
(\infty, t) \mapsto (t, t).
\]
Consider the relative Hom-scheme over $T$ fixing the two morphisms $s_0$ and $s_\infty$
\[
\text{Hom}_T(\PP^1 \times T, X \times T, s_0, s_\infty).
\]
There is a $G$-action on the relative Hom-scheme. The projection to $T$ is $G$-equivariant and smooth at the point represented by the very free curve mapping $0$ and $\infty$ to $x \in T$. So by Corollary \ref{sm-equiv-lifting}, the map from the Hom-scheme to $T$ is dominant and one can find such a deformation.

If $y$ is another point in $T$, the same construction gives a $G$-equivariant very free curve in the smooth locus connecting $y$ and a general point in $T$. To connect $x$ and $y$, take a common general point $z$ in $T$ and two very free $G$-equivariant rational curves connecting $x$ (resp. $y$) to $z$. Then part two of Lemma \ref{lem:EquivSmoothing} shows that there is a very free $G$-equivariant rational curve connecting $x$ and $y$. 
\end{proof}

By the description of the base change, and how the sections correspond to each other, the fixed points we need to connect in Proposition \ref{prop:basechange} are contained in the smooth locus of the central fiber and lie in a single irreducible component of the fixed point loci. Furthermore by \cite{XuSRC}, or by Theorem 21, \cite{BadReduction}, all cubic surfaces with at worst du Val singularities satisfy the condition that for any point in the smooth locus of the cubic surface, there is a very free rational curve in the smooth locus and passing through that point. Thus the last statement in Proposition \ref{prop:basechange} follows from the above lemma.

\section{Approximation in the smooth locus}

This section is devoted to a special case of weak approximation.

\subsection{Finding $G$-equivariant sections}
We first develop the techniques in \cite{HT06} in a $G$-equivariant setting. 
\begin{thm}\label{thm:G-equiv}
Let $G$ be a cyclic group of order $l$ and let $\mcX$ (resp. $C$) be a smooth proper variety (resp. a smooth projective curve) with a $G$-action. Let $\pi: \mcX \to C$ be a flat family of rationally connected varieties. Assume the following:
\begin{enumerate}
\item The morphism $\pi$ is $G$-equivariant. 
\item There is a $G$-equivariant section $s: C \to \mcX$.
\item The $G$-action on $C$ has a fixed point $p$ and the action of $G$ near $p$ is given by $t \mapsto \zeta t$, where $t$ is a local parameter and $\zeta$ is a primitive $l$-th root of unity.

\item The fiber of $\pi: \mcX \to C$ over the point $p$ is smooth. 
\end{enumerate}
Then for any positive integer $N$, and any $G$-equivariant formal section $\whts: \SP \whtOO_{p, C} \to \mcX$, there is a $G$-equivariant section $s'$ which agrees with the formal section $\whts$ to order $N$.
\end{thm}

The idea of the proof goes back to \cite{HT06}. Namely, we would like to add suitable rational curves to the given section and make $G$-equivariant deformations to produce a new section with prescribed jet data. The only subtlety in the proof is that in general we cannot choose the rational curves to be immersed. So instead of working with the normal sheaf as is done in \cite{HT06}, we work with the complex $\Omega_f$ defined as

\[
\begin{CD}
-1 & & 0 \\
f^*\Omega_X @> df^\dagger >> \Omega_C.
\end{CD}
\]
and its derived dual in the derived category. All the tensor products, duals, pull-backs, and push-forwards in the proof should also be taken as the derived functors in the derived category.

The following is a general form of the commonly used short exact sequences (of normal sheaves) which govern the deformation of a stable map from a nodal domain.

\begin{lem}\label{lem:def}
Let $f:C \cup D \to X$ be a morphism from a nodal curve $C\cup D$ with a single node to a smooth variety $X$ and $f_0$ (resp. $f_1$) the restriction of $f$ to $C$ (resp. $D$). Then
\begin{enumerate}
\item
We have the following distinguished triangles:
\[
\Omega_{f}^{\vee}\otimes \OO_D(-n) \to \Omega_f^{\vee} \to \Omega_f^{\vee} \otimes \OO_C \to \Omega_{f}^{\vee}\otimes \OO_D(-n)[1]
\]
\[
\Omega_{f_0}^{\vee} \to \Omega_f^{\vee}\otimes \OO_{C}  \to \epsilon[-1] \to \Omega_{f_0}^{\vee}[1]
\]
where $n$ is the preimage of the node in $D$, and $\epsilon$ is a skyscraper sheaf supported at the preimage of the node in $C$.
\item Let $G$ be a cyclic group of order $l$. Assume that there is a $G$-action on $C\cup D$ fixing each irreducible component. Then the node is a fixed point of the action and there is a natural $G$-action on all the complexes above. If locally around the node, the action is given by
\[
\begin{CD}
\CC[x, y]/xy @>>> \CC[x, y]/xy \\
(x, y)@>>> (\zeta x, \zeta^{-1} y),
\end{CD}
\]
where $\zeta$ is a primitive $l$-th roots of unity, then the $G$-action on $\epsilon$ is trivial.
\end{enumerate}
\end{lem}
\begin{proof}
The first distinguished triangle comes from restriction to the component $C$.

For the second distinguished triangle, consider the following distinguished triangles and the map between them:
\[
\begin{CD}
\Omega_{C\cup D} \otimes \OO_C @>>> \Omega_f \otimes \OO_C@>>> f^*\Omega_X \otimes \OO_C [1]@>>>\Omega_{C\cup D}\otimes \OO_C[1]\\
@VVV@VVV@|@VVV\\
\Omega_{C}  @>>> \Omega_{f_0} @>>> f_0^*\Omega_X[1] @>>>\Omega_{C}[1]
\end{CD}
\]
Therefore we have distinguished triangles
\[
\Omega_{C\cup D}\otimes \OO_C \to \Omega_C \to Q[1]\to \Omega_{C \cup D}\otimes \OO_C[1]
\]
\begin{equation}\label{eq:tri}
\Omega_f \otimes \OO_C \to \Omega_{f_0} \to Q'[1] \to \Omega_f \otimes \OO_C[1], 
\end{equation}
\[
Q[1] \to Q'[1] \to 0 \to Q[2].
\]
where $Q$ is a skyscraper sheaf supported at the node. The last distinguished triangle shows that $Q \cong Q'$. Taking dual of the distinguished triangle (\ref{eq:tri}) gives the second triangle in the lemma.

Part 2 of the lemma can be proved by a local computation. Or we can argue that the sheaf $\epsilon$ corresponds to a $G$-equivariant smoothing of the node. Therefore it has to be $G$-invariant.
\end{proof}

Now we begin the proof.

\begin{proof}[Proof of Theorem \ref{thm:G-equiv}] The proof is divided into two steps.

{\em Step 1: Approximation at $0$-th order.}

We may assume that $$H^1(C, \mcN_{C/\mcX}(-p))=0$$ by the same argument as in Lemma \ref{lem:EquivSmoothing}.

The section $s$ and the formal section $\whts$ intersect the fiber $\mcX_p$ at two fixed points of the $G$-action. Take a rational curve $D\cong \PP^1$ with a $G$-action as $[X_0, X_1] \mapsto [\zeta X_0, X_1]$, where $\zeta$ is the primitive $l$-th root of unity in the assumptions. By Theorem \ref{thm:equivariant}, there is a $G$-equivariant very free curve $ D\cong \PP^1 \to \mcX_p \to \mcX$ which maps $0=[1, 0]$ to $s(p)$ and $\infty=[0, 1]$ to $\whts(p)$. Let $f: C \cup D \to X$ be the nodal curve by combining the section and the curve $D$ and $f_0$ (resp. $f_1$) the restriction of $f$ to $C$ (resp. $D$).

By Lemma \ref{lem:def}, we have the following distinguished triangles:
\[
\Omega_{f}^{\vee}(-\infty) \otimes \OO_C(-p) \to \Omega_f{^{\vee}}(-\infty) \to \Omega_f{^{\vee}}(-\infty) \otimes \OO_D \to \Omega_{f}^{\vee}\otimes \OO_C(-p)[1]
\]
\[
\Omega_{f_0}^{\vee}(-p) \to \Omega_f^{\vee}\otimes \OO_{C}(-p)  \to \epsilon[-1] \to \Omega_{f_0}^{\vee}(-p)[1]
\]
\[
\Omega_{f_1}^{\vee}\otimes \OO_D(-\infty) \to \Omega_f^{\vee}\otimes \OO_{D}(-\infty)  \to \epsilon'[-1]\to \Omega_{f_1}^{\vee}\otimes \OO_D(-\infty)[1]
\]
where $\epsilon$ and $\epsilon'$ are torsion sheaves supported at the node of $C$ and $D$. Every complex has a natural $G$-action, and the $G$-actions on $\epsilon$ and $\epsilon'$ are trivial. Also note that $$\Omega_f(-\infty) \otimes \OO_C \cong \Omega_f \otimes \OO_C.$$

Taking hypercohomology gives long exact sequences
\begin{align}\label{eq:long1}
&0\to \HH^1(\Omega_{f}^{\vee}\otimes \OO_C(-p)) \to \HH^1(\Omega_f^{\vee}(-\infty)) \to \HH^1(\Omega_f^{\vee}(-\infty) \otimes \OO_D)\\
 \to &\HH^2(\Omega_{f}^{\vee}\otimes \OO_C(-p)) 
\to \HH^2(\Omega_f^{\vee}(-\infty)) \to \HH^2(\Omega_f^{\vee}(-\infty) \otimes \OO_D) \to \ldots,\nonumber
\end{align}

\begin{align}\label{eq:long2}
&0\to \HH^1(\Omega_{f_0}^{\vee}\otimes \OO_C(-p)) \to \HH^1(\Omega_f^{\vee}\otimes \OO_{C}(-p)) \to \epsilon\\ 
\to & \HH^2(\Omega_{f_0}^{\vee}\otimes \OO_C(-p)) \to \HH^2(\Omega_f^{\vee}\otimes \OO_{C}(-p))  \to 0,\nonumber
\end{align}
and
\begin{align}\label{eq:long3}
&0\to \HH^1(\Omega_{f_1}^{\vee}\otimes \OO_D(-\infty)) \to \HH^1(\Omega_f^{\vee}\otimes \OO_{D}(-\infty))  \to \epsilon'\\ 
\to & \HH^2(\Omega_{f_1}^{\vee}\otimes \OO_D(-\infty)) \to \HH^2(\Omega_f^{\vee}\otimes \OO_{D}(-\infty))  \to 0.\nonumber
\end{align}
 Note that $\Omega_{f_0}^{\vee}$ is quasi-isomorphic to $\mcN_{C/\mcX}[-1]$. Thus by the second long exact sequence,
\[
\HH^2(\Omega_{f_0}^\vee \otimes \OO_C(-p))=\HH^2(\Omega_f^{\vee} \otimes \OO_C(-p))=0.
\] 

Note that $\Omega_{f_1}^{\vee}$ is quasi-isomorphic to a shifted sheaf $\mcN[-1]$, where $\mcN$ is defined as the quotient in 
\[
0 \to T_D \to f^*T_X \to \mcN\cong f^*T_X/T_D \to 0.
\]
Since $f^*T_X$ is globally generated, $\HH^2(\Omega_{f_1}^{\vee}\otimes \OO_D(-\infty))=0$. Then by the third long exact sequence,
\[
\mathbb{H}^2(\Omega_{f}^{\vee}\otimes \OO_D(-\infty))=0.
\]

Therefore by the long exact sequence (\ref{eq:long1}),
\[
\HH^2(\Omega_f^{\vee}(-\infty))=0,
\]
and thus the $G$-equivariant deformation of the nodal curve $C\cup D$ with the point $\infty$ fixed is unobstructed.
 
Then by the long exact sequences (\ref{eq:long1}), (\ref{eq:long3}) and the vanishing, the composition of maps
\[
\HH^1(\Omega_f^{\vee}(-\infty))^G \to \HH^1(\Omega_f^{\vee}(-\infty) \otimes \OO_D)^G \to \epsilon'
\]
is surjective. Thus there is a $G$-equivariant deformation with $\infty$ fixed which smooths the node between $C$ and $D$.

{\em Step 2: Approximation at higher order.}

Assume that we have a section, still denoted by $s$, which agrees with $\whts$ to the $k (\geq 0)$-th order. We want to find a section agreeing with $\whts$ to order $k+1$.

Now let $\mcX_{k+1}$ be the $(k+1)$-th iterated blow-up of $\mcX$ associated to the formal section $\whts$. Then $G$ also acts on $\mcX_{k+1}$ and the projective to $C$ is $G$-equivariant. By abuse of notations, still denote the strict transforms of $s$ and $\whts$ by $s$ and $\whts$. Then they both intersect the exceptional divisor $E_{k+1} \cong \PP^d$ at fixed points of $G$. Assume the intersection points are different, otherwise there is nothing to prove.

Again we assume that $H^1(C, \mcN_{C/\mcX_{k+1}}(-p))=0$.

The key lemma is the following.

\begin{lem}\label{comb}
There is a comb $f: C\cup D \to \mcX_{k+1}$ from a nodal domain consisting of the given section $s(C)$ and suitable rational curves in the fiber such that
\begin{itemize}
\item $D=D_{k+1} \cup \cup_{j=1}^l R_j$, where $D_{k+1}\cong \PP^1$ and $R_j=\cup_{i=1}^k D_{ij}$ is a chain of rational curves. Denote by $x_j$ the node that connects $D_{k+1}$ to $R_j$.
\item There is a $G$-action on $D$ in the following way. The $G$-action on $D_{k+1}$ is given by 
\[
[X_0, X_1] \mapsto [X_0, \zeta^{-1} X_1].
\]
The group $G$ acts on $R_j, j=1, \ldots, l$ via a cyclic permutation among them. In particular, the points $x_j \in D_{k+1}$ are conjugate to each other under the $G$-action.

\item The morphism $f: C \cup D \to X$ is $G$-equivariant.

\item The $G$-fixed point $\infty=[0, 1]$ on $D_{k+1}$ is mapped to $\whts(p)$, and $0=[1, 0]$ on $D_{k+1}$ connects $C$.
\item The morphism $f: C \cup D$ is an immersion except at $0$ and $\infty$ in $D_{k+1}$.
\item The complex $\Omega_f^{\vee}$ satisfies the following vanishing conditions.

\begin{equation}\label{1}
\HH^2(\Omega_f^{\vee} \otimes \OO_C(-p))=\HH^2(\Omega_f^{\vee} \otimes \OO_{D_{k+1}}(-0-\infty))=\HH^2(\Omega_f^{\vee} \otimes \OO_{D_{i j}}(-1))=0,
\end{equation}

\begin{equation}\label{2}
\HH^2(\Omega_f^\vee(-\infty))=0,
\end{equation}
\begin{equation}\label{3}
\HH^2(\Omega_f^\vee\otimes \OO_{D_{k+1}}(-\infty-x_1-\ldots-x_l))^G=0.
\end{equation}

\end{itemize} 
\end{lem}

The construction is essentially the same as the one in \cite{HT06}, with the only difference coming from the consideration of the $G$-action. For an illustration of the comb $C\cup D$, see Figure. \ref{fig:comb} below and for the configuration of the comb with respect to the iterated blow-up $\mcX_{k+1}$, see Figure. \ref{fig:con}. 

\setlength{\unitlength}{0.8in}
\begin{figure}[h]
 \centering
 \includegraphics[width=4.4in]{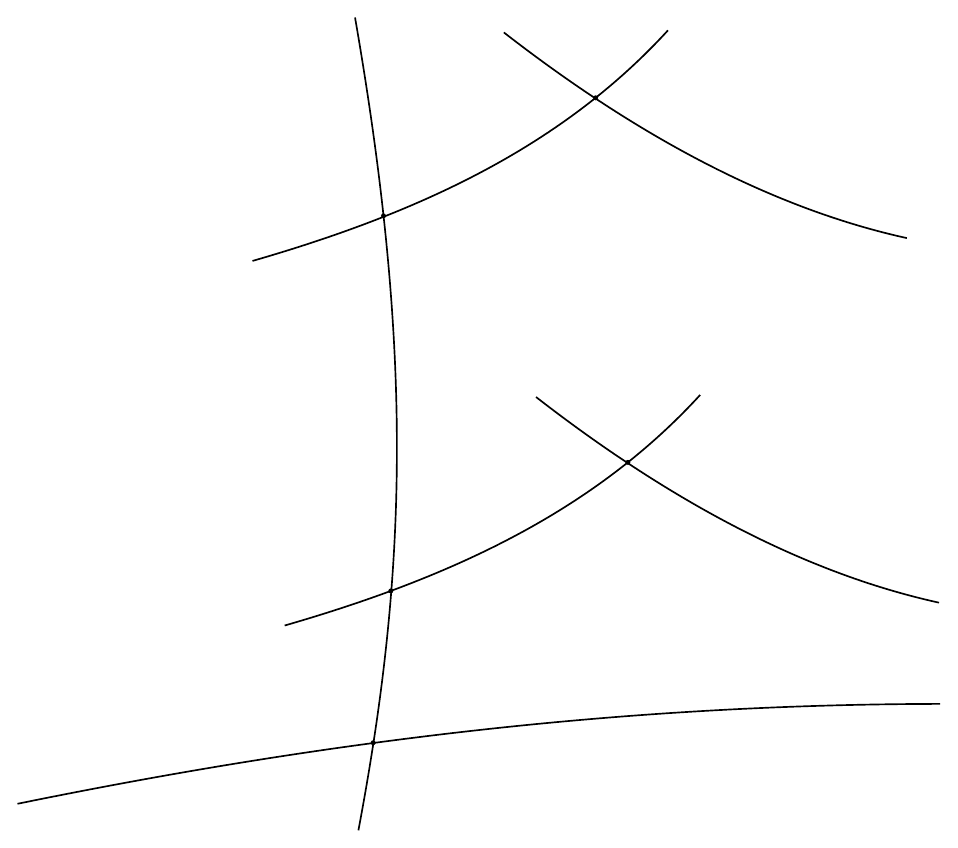}
 \put(-3.55,4.85){$D_{k+1}$}
 \put(-4.25,3.32){$R_1$}
 \put(-3.20,3.50){$x_1$}
 \put(-3.0, 2.50){$\dots\dots$}
 \put(-4.05,1.22){$R_l$}
 \put(-2.92,4.00){$D_{k1}$}
 \put(-3.18,1.30){$x_l$}
 \put(-1.52,4.00){$D_{k-1,1}$}
 \put(-2.72,1.90){$D_{kl}$}
 \put(-1.32,1.90){$D_{k-1,l}$}
 \put(-0.20,3.40){$\dots\dots$}
 \put(-0.05,1.35){$\dots\dots$}
 \put(-0.05,0.55){$C$}
 \caption{The comb $C\cup D$} 
 \label{fig:comb} 
\end{figure}

\begin{figure}[h]
 \centering
 \includegraphics[width=4.4in]{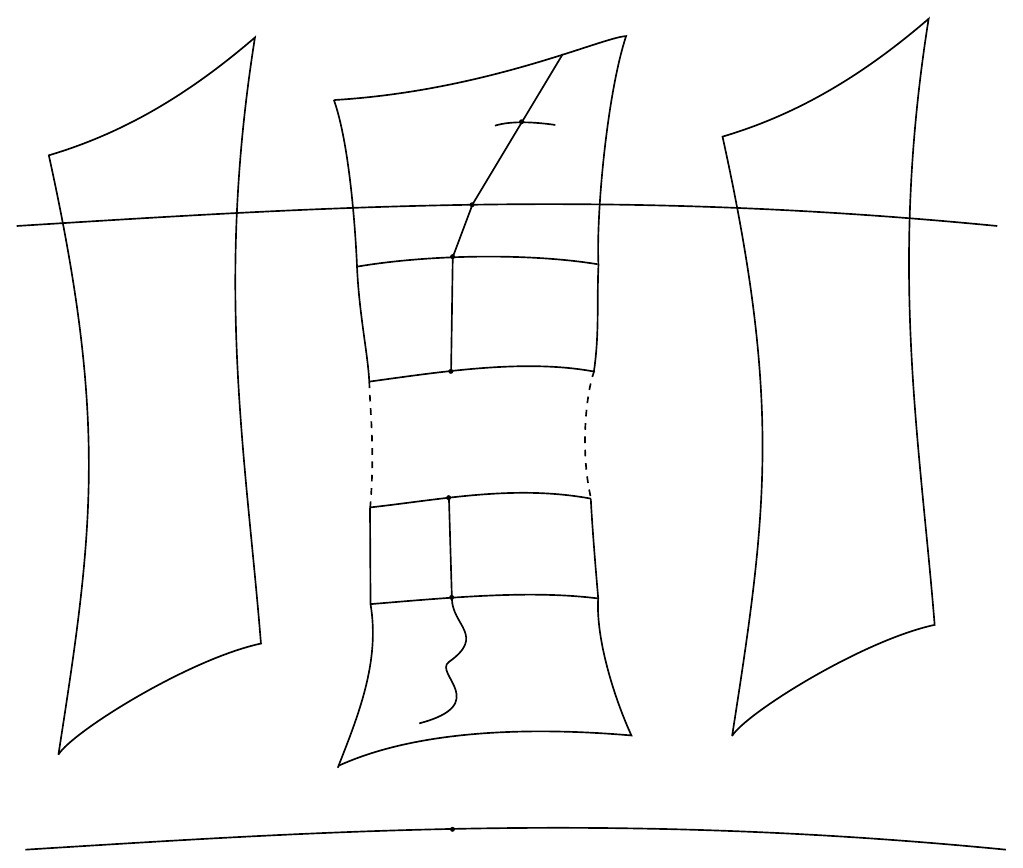}
 \put(-3.25,4.40){$E_{k+1}\cong\mathbb{P}^d$}
 \put(-3.60,3.75){$D_{k+1}\xrightarrow[]{l:1}L$}
 \put(-2.55,4.05){$\wht{s}(p)$}
 \put(-2.95,3.40){$S(p)$}
 \put(-3.25,3.12){$y_k$}
 \put(-3.45,2.70){$y_{k-1}$}
 \put(-3.05,3.10){$D_{k1}\dots D_{kl}$}
 \put(-3.05,2.95){are mapped}
\put(-3.05, 2.80){to this fiber}
 \put(-3.90,2.90){$E_k$}
 \put(-3.05,2.25){$\cdots$}
 \put(-3.25,1.85){$y_1$}
 \put(-3.05,1.83){$D_{11}\dots D_{1l}$}
\put(-3.05,1.68){are mapped}
\put(-3.05, 1.53){to this fiber}
 \put(-3.87,1.65){$E_1$}
 \put(-3.25,1.30){$y_0$}
 \put(-2.97,1.20){$D_{01}$}
 \put(-3.00,1.00){$\cdots$}
 \put(-3.05,0.80){$D_{0l}$}
 \put(-3.87,1.00){$E_0$}
 \put(-3.05,0.00){$P$}
 \put(-0.05,3.40){$S(C)$}
 \put(-0.05,0.00){$C$}
 \caption{Construction of the comb $C\cup D$}
\label{fig:con}
\end{figure}

\begin{proof}[Proof of Lemma \ref{comb}]
The line $L$ in $E_{k+1}\cong \PP^d$ joining $s(p)$ and $\whts(p)$ is invariant and intersects the exceptional divisor $E_{k}$ of $\mcX_{k+1}$ at a unique point $y_k$, which is necessarily a fixed point of $G$. Then there are $3$ fixed points in the line $L$ and thus all points are fixed points of $G$. Take a curve $D_{k+1}\cong \PP^1$. We impose a $G$-action on it by 
\[
[X_0, X_1] \mapsto [X_0, \zeta^{-1} X_1].
\]
Take an $l$-to-$1$ $G$-equivariant map from $D_{k+1}$ to the line $L$ such that $0=[1, 0]$ is mapped to $s(p)$ and $\infty=[0, 1]$ is mapped to $\whts(p)$. There are $l$ points $x_1, \ldots, x_l$, which lie in the same orbit of $G$, being mapped to the point $y_k \in E_k \cap E_{k+1}$, where $E_{k}$ and $E_{k+1}$ are exceptional divisors of the $(k+1)$-th iterated blow-up.

The exceptional divisor $E_k$ is isomorphic to the blow-up of $\PP^d$ at a point, thus is a $\PP^1$-bundle over $\PP^{d-1}$. Let $D_{k, 1}, \ldots, D_{k, l}$ be $l$ copies of $\PP^1$ each mapped isomorphically to the fiber curve $\PP^1$ containing the point $y_k$.

Inductively, let $y_{i}$ be the intersection point of $D_{i+1, 1}$ with $E_i$ and $D_{i, 1}, \ldots, D_{i, l}$ be $l$ copies of $\PP^1$ each mapped isomorphically to the fiber $\PP^1$ containing the point $y_i$ for all $i=k-1, \ldots, 1$. 

Finally let $y_0$ be the point of the intersection of $D_{1, 1}$ with the strict transform of $\mcX|_p$ and let $D_{0, 1} \ldots, D_{0, l}$ be $l$ copies of $\PP^1$ mapped to a very free curve in the strict transform of $\mcX_p$ intersecting $E_1$ at the point $y_0$. We may also assume that the maps are immersions.

Let $R_j$ be the chain of rational curves $\cup_{i=1}^{k}D_{i, j}$ connected to $D_{k+1}$ at the point $x_j$ for $j=1, \ldots, l$, and let $D$ be the curve $D_{k+1} \cup \cup_{j=1}^{l}R_{ j}$. There is a natural $G$-action on $D$, which permutes the $l$-chains of rational curves $R_j$ and acts on the irreducible component $D_{k+1}$ as specified above.

The restriction of the complex $\Omega_f^{\vee}$ to each curve $D_{i, j}$ is quasi-isomorphic to the normal sheaf with a shift $N_{f}[-1]$ (since the comb is an immersion along such curves). One can compute the restriction of $N_f$ to each curve $D_{i, j}$ as follows (see the proof of Sublemma 27, \cite{HT06}).
\begin{equation}\label{eqN}
\mcN_f|_{D_{i, j}}=\begin{cases}
\OO^{\oplus d}, & 1 \leq i \leq k,\\
\oplus_{n=1}^{d-1} \OO(a_n) \oplus \OO, a_n \geq 1 & i=0.
\end{cases}
\end{equation}

We now compute $\Omega_{f_{k+1}}^{\vee}$ on $D_{k+1}$, where $f_{k+1}$ is the restriction of the map to $D_{k+1}$ (i.e. the degree $l$ multiple cover of the line in $\PP^{d-1}$). This complex is quasi-isomorphic to the complex
\[
\begin{CD}
0 & & 1 \\
T_{D_{k+1}}\cong \OO(2) @>>> f_{k+1}^*T_{\mcX_{k+1}}\cong \OO(2l)\oplus \oplus_{i=1}^{d-2}\OO(l) \oplus \OO(-l),
\end{CD}
\]
Also note that the sheaf map $T_{D_{k+1}}\to f_{k+1}^*T_{\mcX_{k+1}}$ is injective and is the composition of maps $\OO(2) \to \OO(2l) \to f_{k+1}^*T_{\mcX_{k+1}}$.

We have a distinguished triangle
\begin{equation}\label{tricky}
\Omega_{f_{k+1}}^{\vee} \to \Omega_f^{\vee} \otimes \OO_{D_{k+1}} \to \epsilon[-1] \oplus \oplus_{j=1}^{l} \epsilon_j[-1]\to \Omega_{f_{k+1}}^{\vee}[1],
\end{equation}
where $\epsilon$ is a torsion sheaf supported at the node connecting $D_{k+1}$ and $C$, and $\epsilon_j$ is a torsion sheaf supported at the node connecting $D_{k+1}$ and $D_{k,j}$. The group $G$ acts on $\epsilon$ by the trivial action and acts on $\epsilon_j$ by permutation.

So the restriction of $\Omega_f^{\vee}$ to $D_{k+1}$ is quasi-isomorphic to the complex
\[
\begin{CD}
0 & & 1 \\
\OO(2) @>>>\OO(2l)\oplus \oplus_{i=1}^{d-2}\OO(l) \oplus \OO(1).
\end{CD}
\]
Since the above map maps the sheaf $\OO(2)$ injectively into the sheaf $\OO(2l)$, this complex is quasi-isomorphic to the shifted sheaf
\[
Q \oplus \oplus_{i=1}^{d-2}\OO(l) \oplus \OO(1) [-1],
\]
where $Q$ is the torsion sheaf defined as the quotient of $\OO(2) \to \OO(2l)$. Note that the $\OO(1)$ direction is the normal direction of the fiber.

Finally, the restriction of $\Omega_f^{\vee}$ to $C$ fits into the distinguished triangle
\[
\mcN_{C/\mcX_{k+1}}[-1] \to \Omega_f^\vee \otimes \OO_C \to \epsilon_0 \to \mcN_{C/\mcX_{k+1}},
\]
where $\epsilon_0$ is a torsion sheaf supported at the node.

Then the vanishing conditions (\ref{1}) are immediate from the identifications above. 

By the distinguished triangle
\[
\Omega_f^\vee \otimes \OO_C(-p) \to \Omega_f^\vee(-\infty) \to \Omega_f^\vee \otimes \OO_D(-\infty) \to \Omega_f^\vee \otimes \OO_C(-p)[1]
\]
and the three vanishing results in \ref{1}, we know that
\[
\HH^2(\Omega_f^\vee(-\infty))=0.
\]
This is the vanishing in (\ref{2}).

The vanishing in (\ref{3}) needs a little bit more work since it is only the $G$-invariant part of the hypercohomology group that vanishes.  First notice the following.
\begin{lem}\label{lem:smoothCD}
Assume only that the comb $C \cup D$ satisfies vanishing results (\ref{1}) and (\ref{2}). Then a general $G$-equivariant deformation of $C\cup D$ with $\infty$ fixed is unobstructed and smooths the node connecting $C$ and $D_{k+1}$.
\end{lem}
\begin{proof}
The vanishing result (\ref{2}) implies that the $G$-equivariant deformation of $C\cup D$ with $\infty$ fixed is unobstructed.

We first consider the following distinguished triangles
\begin{equation}\label{t2}
\Omega_f^\vee \otimes \OO_{D}(-\infty-0) \to \Omega_f^\vee(-\infty) \to \Omega_f^\vee(-\infty)\otimes \OO_C \to \Omega_f^\vee \otimes \OO_{D}(-\infty-0)[1]
\end{equation}
and
\begin{align} \label{t1}
&\oplus_{j=1}^l \Omega_f^\vee  \otimes \OO_{R_j}(-x_j) \to \Omega_f^\vee \otimes \OO_{D}(-\infty-0) \to \Omega_f^\vee \otimes\OO_{D_{k+1}}(-\infty-0) \\
\to &\oplus_{j=1}^l \Omega_f^\vee  \otimes \OO_{R_j}(-x_j)[1].\nonumber
\end{align}

Recall that $\Omega_f^\vee  \otimes \OO_{R_j}$ is quasi-isomorphic to a shifted normal sheaf $\mcN_f \otimes \OO_{R_j}[-1]$, and the sheaves $\mcN\otimes \OO_{R_j}$ are locally free and globally generated by (\ref{eqN}). Therefore
\[
\HH^2(\oplus_{j=1}^l \Omega_f^\vee  \otimes \OO_{R_j}(-x_j))=0,
\]
and thus by the distinguished triangle (\ref{t1}),
\[
\HH^2(\Omega_f^\vee \otimes \OO_{D}(-\infty-0))=0,
\]
which, combined with the long exact sequence of hypercohomology of the distinguished triangle (\ref{t2}), implies that the map
\begin{equation}\label{4}
\HH^1(\Omega_f^\vee(-\infty))^G \to \HH^1(\Omega_f^\vee(-\infty)\otimes \OO_C)^G
\end{equation}
is surjective.

Then we look at the distinguished triangle
\[
\Omega_{f_0}^\vee \to \Omega_f^\vee \otimes \OO_C \to \epsilon_0[-1] \to \Omega_{f_0}^\vee[1],
\]
where $f_0$ is the restriction of $f$ to $C$ and $\epsilon$ is a skyscraper sheaf supported at the point $p$.

By the vanishing results (\ref{1}), the map
\begin{equation}\label{6}
\HH^1(\Omega_f^\vee(-\infty) \otimes \OO_C)^G \to (\epsilon_0)^G=\epsilon_0
\end{equation}
is surjective. 

Note that $\Omega_f^\vee \otimes \OO_C \cong \Omega_f^\vee(-\infty)\otimes \OO_C$. Combining this identification and the surjectivity of maps in (\ref{6}) and (\ref{4}), we have proved that a general $G$-equivariant deformation with $\infty$ fixed smooths the node connecting $C$ and $D_{k+1}$.
\end{proof}

We have a distinguished triangle
\[
\Omega_{f_{k+1}}^\vee(-\infty) \to \Omega_f^\vee \otimes \OO_{D_{k+1}}(-\infty) \to \epsilon[-1] \oplus \oplus_{j=1}^{l} \epsilon_j[-1]\to \Omega_{f_{k+1}}^\vee(-\infty)[1],
\]
where $\epsilon$ is a torsion sheaf supported at $0 \in D_{k+1}$. This induces a map
\begin{equation}\label{8}
\HH^1(\Omega_f^\vee \otimes \OO_{D_{k+1}}(-\infty))^G \to \epsilon
\end{equation}
By Lemma \ref{lem:smoothCD}, a general deformation of $C\cup D$ with $\infty$ fixed is unobstructed and smooths the node connecting $C$ and $D_{k+1}$ (note that the proof of this result is independent of the vanishing (\ref{3})). Thus the composition
\[
\HH^1(\Omega_f^\vee(-\infty) \to \HH^1(\Omega_f^\vee \otimes \OO_{D_{k+1}}(-\infty))^G \to \epsilon
\]
is surjective. So the map in (\ref{8}) is also surjective. 

Recall that $\Omega_f^\vee \otimes \OO_{D_{k+1}}(-\infty)$ is quasi-isomorphic to the shifted sheaf
\[
(Q \oplus \oplus_{i=1}^{d-2}\OO(l) \oplus \OO(1))\otimes \OO_{D_{k+1}}(-\infty) [-1],
\]
and the $\OO(1)$ direction is the normal direction of the fiber.

Moreover the map in (\ref{tricky}) is can be written as
\begin{align*}
&Q \oplus \oplus_{i=1}^{d-2}\OO(l) \oplus \OO(-l) [-1] \to Q \oplus \oplus_{i=1}^{d-2}\OO(l) \oplus \OO(1) [-1]\\
\to  &\epsilon[-1] \oplus \oplus_{j=1}^{l} \epsilon_j[-1]
\to Q \oplus \oplus_{i=1}^{d-2}\OO(l) \oplus \OO(-l)
\end{align*}

Thus only the $\OO(1)\otimes \OO_{D_{k+1}}(-\infty)$
 summand may have a non-zero map to $\epsilon$ in the above evaluation map in (\ref{8}). Thus the unique section in this summand (i.e. the section of $H^0(\OO(1)\otimes \OO_{D_{k+1}}(-\infty))=H^0(\OO_{D_{k+1}})$) is mapped to a non-zero element in $\epsilon$. Furthermore, this unique section, thought of as a section in $$\HH^1(\Omega_f^\vee \otimes \OO_{D_{k+1}})^G$$
via the inclusion
\[
\HH^1(\Omega_f^\vee \otimes \OO_{D_{k+1}}(-\infty))^G \to \HH^1(\Omega_f^\vee \otimes \OO_{D_{k+1}})^G
\]
only vanishes at $\infty \in D_{k+1}$. Therefore the map
\begin{equation}\label{eq:surj}
H^0( \OO(1) \otimes \OO_{D_{k+1}}(-\infty))^G \to (\oplus_{j=1}^l \epsilon_j)^G
\end{equation}
is surjective.

To prove the vanishing in (\ref{3}), we only need to consider the $\OO(1)$ summand since all the other summands have enough positivity to kill the higher cohomology $\HH^2$. For the $\OO(1)$ summand, consider the short exact sequence
\[
0 \to \OO(1)\otimes \OO_{D_{k+1}}(-\infty-x_1-\ldots-x_l) \to \OO(1) \otimes \OO_{D_{k+1}}(-\infty) \to \oplus_{j=1}^l \epsilon_j \to 0,
\]
which induces a map on the $G$-invariant part of cohomology
\begin{align*}
&H^0( \OO(1) \otimes \OO_{D_{k+1}}(-\infty))^G \to (\oplus_{j=1}^l \epsilon_j)^G \\
\to &H^1(\OO(1)\otimes \OO_{D_{k+1}}(-\infty-x_1-\ldots-x_l))^G \to H^1( \OO(1) \otimes \OO_{D_{k+1}}(-\infty))^G.
\end{align*}
Since the map (\ref{eq:surj}) is surjective and $H^1( \OO(1) \otimes \OO_{D_{k+1}}(-\infty))^G$ vanishes, we have 
\[
H^1(\OO(1)\otimes \OO_{D_{k+1}}(-\infty-x_1-\ldots-x_l))^G=0,
\]
and thus
\[
\HH^2(\Omega_f^\vee \otimes \OO_{D_{k+1}}(-\infty-x_1-\ldots-x_l))^G=0.
\]
\end{proof}

We now finish the proof of step 2. Consider the distinguished triangles
\[
\Omega_{f}^{\vee}(-\infty) \otimes \OO_C(-p) \to \Omega_f^{\vee}(-\infty) \to \Omega_f^{\vee}(-\infty) \otimes \OO_D \to \Omega_{f}^{\vee}\otimes \OO_C(-p)[1]
\]
\begin{align*}
&\Omega_{f}^\vee\otimes \OO_{D_{k+1}}(-\infty-x_1-\ldots-x_l) \to \Omega_f^\vee \otimes \OO_{D}(-\infty)\\
 \to &\oplus_{j=1}^l \Omega_f^\vee \otimes \OO_{R_j} \to \Omega_{f}^\vee\otimes \OO_{D_{k+1}}(-\infty-x_1-\ldots-x_l)[1].
\end{align*}

The vanishings in (\ref{1}), (\ref{3}) imply that the map
\begin{equation}\label{7}
\HH^1(\Omega_f^\vee(-\infty))^G \to \HH^1(\Omega_f^\vee \otimes \OO_{D}(-\infty))^G \to \HH^1(\oplus_{j=1}^l \Omega_f^\vee \otimes \OO_{R_j})^G
\end{equation}
is surjective (note that $\Omega_f^\vee \otimes \OO_{R_j}\cong \Omega_f^\vee(-\infty) \otimes \OO_{R_j}$).

Since the $G$-action on the chain of rational curves $R_j$ is permutation. There is a section of
\[
\HH^1(\oplus_{j=1}^l \Omega_f^\vee \otimes \OO_{R_j})^G
\]
which is mapped to a non-zero element in the $G$-invariant part of the torsion sheaf supported at the nodes on $R_j, j=1, \ldots, l$ if and only if there is a section of 
\[
\HH^1( \Omega_f^\vee \otimes \OO_{R_j})
\]
which is mapped to a non-zero element in the torsion sheaf supported at the nodes on $R_j$ and for some (and hence for all) $j$. 

Since the restriction of $\Omega_f^\vee$ to $R_j$ is quasi-isomorphic to $\mcN_f\otimes \OO_{R_j}[-1]$ and $\mcN_f\otimes \OO_{R_j}$ is locally free and globally generated by the vanishing (\ref{1}) or (\ref{eqN}), this follows from the same argument as in \cite{HT06} (in particular, the bottom of P. 187 and P. 188).

So combining this observation with the surjectivity of the map in (\ref{7}) and Lemma \ref{lem:smoothCD}, we have proved that a general $G$-equivariant deformation with $\infty$ fixed smooths all the nodes and produces a new section which agrees with $\whts$ to order $k+1$.
\end{proof}

\subsection{Weak approximation in the smooth locus}
The following is a special case of weak approximation, which turns out to be all one needs to finish the proof. The basic idea is that when the central fiber is very singular, a base change and a birational modification will greatly improve the singularities. Then one just need to keep track of the Galois group action to get back to the original family.
\begin{lem}\label{lem:approximate}
Let $\pi: \mcX \to B$ be a standard model of families of cubic surfaces over a smooth projective curve $B$ and $s: B \to \mcX$ be a section. Let $b_1, \ldots, b_k, b_{k+1}, \ldots, b_m$ be finitely many points in $B$, and $\whts_j$, $k+1 \leq j \leq m$ be formal sections over the points $b_j, k+1\leq j \leq m$, which lie in the smooth locus of $\pi: \mcX \to B$. Assume that the section $s$ intersects the fibers over $b_{k+1}, \ldots, b_m$ in the smooth locus. Then given a positive integer $N$, there is a section $s': B \to \mcX$ such that $s'$ is congruent to $s$ modulo $\mfm_{B,b_i}^{N}$ for all $1 \leq i \leq k$, and congruent to $\whts_j$ modulo $\mfm_{B, b_j}^N$, for all $k+1 \leq j \leq m$.
\end{lem}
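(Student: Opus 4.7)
The plan is to combine Proposition \ref{prop:basechange} with the $G$-equivariant approximation technique of Theorem \ref{thm:G-equiv}, adapted to work in the smooth locus of the central fibers. For each $b_j$ with $k+1 \leq j \leq m$, let $l_j$ be the degree required by Proposition \ref{prop:basechange}: set $l_j = 1$ if the central fiber over $b_j$ already has du Val singularities, and otherwise let $l_j \in \{2,3,4,5,6\}$ be the degree from \ref{basechange1} or \ref{basechange2}. Set $L = \text{lcm}(l_{k+1}, \ldots, l_m)$. Construct a connected Galois cover $B' \to B$ with cyclic group $G = \ZZ/L\ZZ$, unramified over $b_1, \ldots, b_k$, with ramification index $l_j$ at each preimage of $b_j$ (so the preimages form a single $G$-orbit with stabilizer $\ZZ/l_j\ZZ$), and with enough additional ramification elsewhere to ensure $g(B') \geq 2$. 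Pull $\mcX$ back to $\mcX \times_B B'$ and perform the local birational modifications of Proposition \ref{prop:basechange} over each preimage of each $b_j$, obtaining a family $\mcX' \to B'$ carrying a $G$-action compatible with projection, whose central fibers over the preimages of $b_{k+1}, \ldots, b_m$ have at worst du Val singularities. The section $s$ and the formal sections $\whts_j$ pull back to a $G$-equivariant section $\tilde{s}$ of $\mcX' \to B'$ and $G$-invariant collections of formal sections over the $G$-orbits above the $b_j$, all lying in the smooth locus of $\mcX'$ and meeting each central fiber in its smooth locus.

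Next, I would apply the $G$-equivariant approximation argument from the proof of Theorem \ref{thm:G-equiv} to produce a $G$-equivariant section $\tilde{s}'$ of $\mcX' \to B'$ agreeing with all the pulled-back formal sections modulo $\mfm_{B', b'}^{NL}$, where $b'$ ranges over the preimages of the $b_j$. The only adjustment to the cited argument is that the central fibers of $\mcX'$ over preimages of the $b_j$ ($j \geq k+1$) may be singular rather than smooth. This is precisely what the final sentence of Proposition \ref{prop:basechange} handles: it produces $G$-equivariant very free rational curves lying in the \emph{smooth locus} of each such central fiber connecting the intersection point of $\tilde{s}$ with that of the relevant formal section. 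With these curves in hand, Construction \ref{KMM} (for the zeroth order) together with Construction \ref{KMM2} and the iterated blow-ups of Section 2.2 (well defined along formal sections in the smooth locus) allow the entire smoothness-of-Hom-scheme and normal-bundle calculation of Theorem \ref{thm:G-equiv} to run inside the smooth locus of $\mcX'$, yielding the required $G$-equivariant section $\tilde{s}'$.

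Finally, since $\tilde{s}'$ is $G$-equivariant, it descends to a rational section of the quotient $\mcX'/G$, which is birational to $\mcX$ and coincides with $\mcX$ away from the preimages of $b_{k+1}, \ldots, b_m$; properness of $\mcX \to B$ extends this to a regular section $s': B \to \mcX$. At each $b_i$ with $i \leq k$ the cover $B' \to B$ is \'etale and no modification is performed, so $s'$ agrees with $s$ modulo $\mfm_{B, b_i}^N$ automatically (since the pulled-back formal section was just that coming from $s$ and we approximated to order $NL \geq N$). At each $b_j$ with $j \geq k+1$, the ideal $\mfm_{B, b_j}^N$ pulls back to $\mfm_{B', b_j'}^{Nl_j}$, so approximation to order $NL$ upstairs implies approximation to order $N$ of $s'$ to $\whts_j$ downstairs, completing the proof. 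The main obstacle is verifying that the adaptation of Theorem \ref{thm:G-equiv}'s smoothing argument to singular central fibers genuinely stays inside the smooth locus throughout the deformations, so that the iterated blow-ups remain valid and the normal bundle calculations are unchanged; once this is granted, the combinatorial bookkeeping of the descent is routine.
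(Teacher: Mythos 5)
Your overall strategy matches the paper's: globalize the local base change of Proposition \ref{prop:basechange} via a cyclic cover $B' \to B$, apply the $G$-equivariant approximation machinery of Theorem \ref{thm:G-equiv} (adapted to singular central fibers via the last sentence of Proposition \ref{prop:basechange}), and descend. But there is a substantive divergence from the paper that opens a gap. You build the cover so that the preimage of each $b_j$ has ramification index exactly $l_j$, so that when $l_j < L$ the preimages of $b_j$ form a nontrivial $G$-orbit of $L/l_j$ points, with stabilizer $\ZZ/l_j\ZZ$. The paper instead uses a cover \emph{totally} ramified at each $b_{k+1},\ldots,b_m$ (choosing $l$ sufficiently divisible and modifying the change of variables in \ref{basechange1}, \ref{basechange2} accordingly, e.g.\ $Y_0 = X_0$, $r^{l'}Y_i = X_i$ with $l = 3l'$), precisely so that each $c_j$ is a $G$-fixed point. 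Theorem \ref{thm:G-equiv} is stated for $G$-fixed points $p_i$ with $G$-equivariant formal sections, and its proof (Step 1, Constructions \ref{KMM} and \ref{KMM2}) relies on attaching $G$-equivariant curves in a single $G$-invariant fiber $\mcX_{p}$ through $G$-fixed intersection points. Your claim that ``the only adjustment to the cited argument is that the central fibers may be singular'' is therefore not accurate: a second, unaddressed adjustment is needed to run the argument on $G$-\emph{orbits} of points, attaching an $H$-equivariant curve in one fiber (where $H = \ZZ/l_j\ZZ$ is the stabilizer) and translating by $G$ to the rest of the orbit, then smoothing $G$-equivariantly. This is plausibly fixable but is neither stated nor proved, and it is exactly the complication the paper's total-ramification choice is designed to avoid. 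A secondary consequence: you fold the du Val case ($l_j = 1$) into the same uniform construction, giving full $G$-orbits of unramified points; the paper instead disposes of these first using Theorem 1 of \cite{BadReduction} and strong rational connectedness of the smooth locus, again so that the subsequent Galois argument only ever sees fixed points.
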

\begin{proof}
By the iterated blow-up construction, keeping the jet data is the same as keeping section intersect certain exceptional divisors in the iterated blow-up, which, in turn, is equivalent to keeping the intersection numbers of the section with exceptional divisors in the iterated blow-up. In the following we will only use deformation/smoothing argument and we will only use a general deformation (i.e. without specialization) to prove the weak approximation result. Thus the intersection numbers are always kept. Since we start with a section which intersects the fibers over given points in the smooth locus, the section we produce by adding curves in the smooth locus of $\pi$ and smoothing also intersect the fibers over the given points in the smooth locus. Therefore we can reduce the general case to the case that $b_{k+1}, \ldots, b_m$ is just a single point $b\in B$.

First of all we may approximate the formal section if the fiber over $b$ has at worst du Val singularities. Indeed the section $s$ and the formal section we need to approximate lie in the smooth locus over $b$ by assumption. So the statement follows from Theorem 1, \cite{BadReduction} and the fact that the smooth locus of log del Pezzo surfaces are strongly rationally connected \cite{XuSRC} (or Theorem 21, \cite{BadReduction} for the case of cubic surfaces). 

From now on we assume the fiber over the point $b$ is either a cone over a plane cubic curve or non-normal. By Proposition \ref{prop:basechange}, at least for the formal neighborhood, we can find a ramified base change and a birational modification so that the new central fibers have du Val singularities only and the Galois group acts on the total space of the formal neighborhood.

The next goal is to show that we can make the base change globally on the curve $B$.

Given finitely many points $x_1, x_2, \ldots, x_n$ in $B$, and any positive integer $l$, there is a cyclic cover of degree $l$ of $B$ which is totally ramified over $x_1, \ldots, x_n$ (and other points). To see this, take a general Lefschetz pencil which maps $x_1, \ldots, x_n$ (and other points) to $0 \in \PP^1$ and is unramified over these points. Take a degree $l$ map $B_1=\PP^1 \to \PP^1, [X_0, X_1] \mapsto [X_0^l, X_1^l]$ and let $C=B\times_{\PP^1} B_1$ be the fiber product. Then $C$ is the desired cyclic cover. We may also choose the cover $C \to B$ so that the preimages of $b_1, \ldots, b_k$ are $l$ distinct points.

Let $C \to B$ be a cyclic cover of degree $l$ (which is determined in subsections \ref{basechange1} and \ref{basechange2} according to the type of singularities), totally ramified at the point $b$ (and other points). There is a new family over the curve $C$ by base change. The cyclic group $G=\ZZ/l \ZZ$ acts on $C$ and the total space of the new family over $C$ in such a way that the projection to $C$ is $G$-equivariant. Let $c$ be the points in $C$ which is mapped to $b$. One can modify the family locally around $c$ as in subsections \ref{basechange1} and \ref{basechange2}. 

Let $\mcX' \to C$ be the family after the base change and birational modifications. The group $G$ still acts on the total space $\mcX'$ and the projection to $C$ is $G$-equivariant.

The section $s$ induces a $G$-equivariant section of the new family $\mcX' \to C$ and has the desired jet data at all points mapped to $b_1, \ldots, b_k$. Still denote the new section by $s$. Moreover, the new $G$-equivariant section $s$ intersects the fiber over the point $c$ in the smooth locus (Proposition \ref{prop:basechange}). So do the new formal sections we want to approximate.

Now the argument in Theorem \ref{thm:G-equiv} proves weak approximation in this case. The theorem needs the assumption that the fiber over $c$ is smooth. But this can be weakened as the following:
\begin{itemize}
\item the section and the formal sections intersect the fibers over $c$ in the smooth locus, and
\item there are $G$-equivariant very free curves in the smooth locus connecting the intersection points of the central fiber with the $G$-equivariant section and formal sections over the points $c$.
\end{itemize}
The second condition is proved in Proposition \ref{prop:basechange}.
\end{proof}

\section{Proof of the main theorem}
We first show that there are ``nice" sections for a standard model of a families of cubic surfaces. The idea goes back to an argument of Keel-$\text{M}^{\text{c}}\text{Kernan}$ \cite{KeelMckernan} Sec. 5, in particular, the proof of Corollary 5.6. Hassett-Tschinkel also used the idea of Keel-$\text{M}^{\text{c}}\text{Kernan}$ to study strong rational connectedness in \cite{WAhypersurface}, which is very similar to the argument presented here.
\begin{lem}\label{lem:smoothlocus}
Let $\pi: \mcX \to B$ be a standard model of family of cubic surfaces over a smooth projective curve $B$ and $s: B \to \mcX$ be a section. Given finitely many points $b_1, \ldots, b_k$ in $B$, and a positive integer $N$, there is a section $s': B \to \mcX$ such that $s'$ is congruent to $s$ modulo $\mfm_{B,b_i}^{N}$ and $s'(B-\cup b_i)$ lies in the smooth locus of $\pi: \mcX \to B$.
\end{lem}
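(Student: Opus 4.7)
The plan is to first identify the bad locus as a codimension-two closed subset of the three-fold $\mcX$, then to use an attach-and-smooth argument to produce a section with sufficiently positive normal bundle and the prescribed jets, and finally to deform within the resulting linear system so as to miss the bad locus.

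Let $Z\subset\mcX$ be the locus where $\pi$ is not smooth; equivalently $Z=\bigcup_{b\in B}\text{Sing}(\mcX_b)$, which in particular contains the finitely many terminal index-one singularities of $\mcX$. For a standard model of cubic surfaces the fiberwise singular locus of $\pi$ has dimension at most one in each fiber---it is either a finite set of du Val points, the vertex of a cone together with possible elliptic-curve singularities, or a single line in the non-normal case---so $Z$ has dimension at most one in the three-fold $\mcX$, hence codimension at least two.

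Next I would produce a ``free'' approximation of $s$. Choose general points $q_1,\dots,q_r\in B\setminus\{b_1,\dots,b_k\}$ whose fibers are smooth cubic surfaces and on which $s$ lands in the smooth locus; such points are Zariski dense in $B$ since $Z$ is a proper closed subset. In each smooth fiber $\mcX_{q_j}$ there is a very free rational curve through $s(q_j)$, since a smooth cubic surface of dimension at least two is rationally connected. Attach these curves to $s(B)$ and smooth the resulting reducible curve by a Graber--Harris--Starr-style argument, exactly as in the proof of Lemma~\ref{lem:EquivSmoothing} but without any group action; to preserve jet data I would work throughout on the $(N+1)$-th iterated blow-up of $\mcX$ at each $b_i$ whose image $s(b_i)$ lies in the smooth locus of $\pi$, while at any remaining $b_i$ the congruence $s'\equiv s\pmod{\mfm_{B,b_i}^N}$ already forces $s'(b_i)=s(b_i)$, which is permitted by the statement. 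For $r$ large enough the smoothing then yields a section $s_0$ of $\pi$ agreeing with $s$ to order $N$ at every $b_i$ and whose normal bundle satisfies $H^1(B,N_{s_0(B)/\mcX}(-(N+1)\sum_i b_i-y))=0$ for every closed point $y\in B$.

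Given such positivity, the tangent space $H^0(B,N_{s_0(B)/\mcX}(-N\sum_i b_i))$ of jet-preserving first-order deformations is base-point-free on $B\setminus\{b_1,\dots,b_k\}$, so the evaluation map from the universal family of jet-preserving deformations of $s_0$ to $\mcX$ is dominant on the complement of the fibers over the $b_i$. The preimage of $Z$ inside this universal family then has codimension at least two, and a general deformation is disjoint from $Z$ away from $\{b_1,\dots,b_k\}$; this is the required $s'$. The hard part will be the attach-and-smooth step carried out on the iterated blow-up: one must check that after attaching the $C_j$'s at points distinct from the $b_i$ and smoothing, the strict transform of the section still meets the prescribed points on the exceptional divisors $E_{N+1,i}$ while keeping the requisite $H^1$-vanishing, and this is where the choice of $r$ and the positivity of the $C_j$'s enters.
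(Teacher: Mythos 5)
Your approach differs substantially from the paper's and contains a genuine gap at the step where you need the deformation theory of $s_0$ inside the singular three-fold $\mcX$ to behave well. The section $s_0$ you construct by attach-and-smooth is produced inside $\mcX$ itself, but nothing forces $s_0(B)$ to avoid the isolated terminal singularities of $\mcX$. If $s_0(B)$ passes through such a point $x$, then $s_0(B)$ need not be a regular (lci) embedding in $\mcX$ near $x$: for instance, if $x$ is an ordinary double point, a smooth curve through $x$ has an ideal sheaf that is not locally generated by a regular sequence, so $N_{s_0(B)/\mcX}$ fails to be locally free. Consequently the vanishing $H^1(B,N_{s_0(B)/\mcX}(-\cdots))=0$ does not give you a smooth, unobstructed deformation space of the expected dimension, and the claim that the evaluation map from the universal jet-preserving family is dominant (and that $ev^{-1}(Z)$ then has codimension two, so fails to dominate the base) is exactly the kind of conclusion that can break. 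In other words, your argument tries to deform away from the singular locus using positivity of a normal bundle whose very local freeness is in doubt precisely at the points you are trying to avoid --- the reasoning is circular.

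The paper's proof is designed to sidestep this. It passes to a resolution $\mcX_2 \to \mcX_1$ which is an isomorphism over the smooth locus, with SNC exceptional divisors $E_i$, and works with the strict transform of the section in the \emph{smooth} total space $\mcX_2$, where the Kontsevich moduli spaces $\overline{M}_{g,g+1}$ are well behaved and the dimension of the component $U_0$ can be computed exactly as $-K_{\mcX_2}\cdot D - 2(g+1)$ (Lemma~\ref{lem:dim}). On the other hand, $\mcX_1$ has lci singularities, so the corresponding moduli space downstairs has dimension at least $-K_{\mcX_1}\cdot D - 2(g+1)$. Since $U_0$ dominates $V$, one gets $-K_{\mcX_1}\cdot D \leq -K_{\mcX_2}\cdot D$, and the terminal discrepancy relation $-K_{\mcX_1} = -K_{\mcX_2} + \sum a_i E_i$ with all $a_i>0$ then forces $D\cdot E_i = 0$, i.e.\ the curve in $\mcX_2$ misses the exceptional locus. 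This numerical argument is exactly what replaces your appeal to base-point-freeness of a normal bundle; it is indispensable because no amount of positivity of $N_{s_0(B)/\mcX}$ on the \emph{curve} side controls the failure of local freeness at singular points of the ambient three-fold. If you want to rescue your version, you would need either to prove a priori that the smoothing $s_0$ stays in the smooth locus of $\mcX$ (which is essentially the statement of the lemma), or to carry out the deformation in a resolution and then use some comparison, at which point you are led back to the paper's argument.
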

\begin{proof}
One first resolves the singularities of $\mcX$ along the fibers over $b_i$ in such a way that the partial resolution is an isomorphism except along these fibers. Then use the iterated blow-up construction according to the jet data of $s$ near the points $b_i$. After sufficiently many iterated blow-ups, fixing the jet data is the same as passing through fixed components. Call the new space $\mcX_1$.

Then the lemma is reduced to showing that there is a section of the new family $\mcX_1 \to B$ which has desired intersection number with irreducible components of the fibers over $b_1, \ldots, b_k$ in $B$ and lies in the smooth locus of $\mcX_1 \to B$. 

In the following proof, we will show that the given section $s$, after adding very free rational curves in general fibers, deforms away from the singular locus of the total space. Since the deformation will not change the intersection numbers with divisors, we get a deformation into the smooth locus with the jet data fixed.

Take a resolution of singularities $\mcX_2 \to \mcX_1$ which is an isomorphism over the smooth locus such that the exceptional locus in $\mcX_2$ consists of simple normal crossing divisors $E_i, i=1, \ldots, n$. After adding very free curves in general fibers and smoothing, we may assume the strict transform of the section $s$, denoted by $f: B \to \mcX_2$, passes through $g+1$ very general points $p_1, \ldots, p_{g+1}$ in $\mcX_2$, where $g$ is the genus of $B$.

First consider the Kontsevich moduli space of stable maps $\overline{M}_{g, g+1}(\mcX_1)$ parameterizing stable maps from genus $g$ curves with $g+1$ marked points to $\mcX_1$, which pass through $g+1$ very general points $p_1, \ldots, p_{g+1}$ in $\mcX_1$. Let $V$ be an irreducible component containing the point represented by the map $f: B \to \mcX_1$. 

Next consider the Kontsevich moduli space of stable maps $\overline{M}_{g, g+1}(\mcX_2)$ parameterizing stable maps from genus $g$ curves with $g+1$ marked points to $\mcX_2$, which pass through $p_1, \ldots, p_{g+1}$. There is a natural forgetful map from $\overline{M}_{g, g+1}(\mcX_2)$ to the corresponding moduli space of stable maps to $\mcX_1$. Take $U$ to be the inverse image of $V$ and write the restriction of the forgetful map as $F: U \to V$.

Note that $U$ has at most countably many irreducible components. Clearly the forgetful map $F$ surjects onto an open dense subset of $V$ since we can always lift a section from $\mcX_1$ to $\mcX_2$, with all the conditions still satisfied. Thus there is an irreducible component $U_0$ of $U$ which dominates $V$ (here we are using the fact that $\CC$ is uncountable). By the following lemma (to be proved later), $\dim U_0=-K_{\mcX_2} \cdot D-2(g+1)$, where $f': D \to \mcX_2$ is general point (hence $D$ is irreducible) in $U_0$.

\begin{lem}\label{lem:dim}
Let $X$ be a smooth $3$-fold. Assume that given $g+1$ very general points $x_1, \ldots, x_{g+1}$ in $X$, there is an embedding $f: C \to X$ of a smooth projective curve of genus $g$ in $X$ which maps $g+1$ points $c_1, \ldots, c_{g+1}$ in $C$ to $x_1, \ldots, x_{g+1}$. Let $f': (D, d_1, \ldots, d_{g+1}) \to (X, x_1, \ldots, x_{g+1})$ be a general deformation of $C$. Then $H^1(D, \mcN_{D/X}(-d_1-\ldots-d_{g+1}))=0$. In particular, every irreducible component containing the morphism 
\[
f: (C, c_1, \ldots c_{g+1}) \to (X, x_1, \ldots, x_{g+1})
\]
 has dimension $-K_X \cdot C-2(g+1)$.
\end{lem}

The standard model has isolated cDu Val singularities, i.e. $3$-fold terminal and local complete intersection singularities. So does the new total space $\mcX_1$ by construction. Therefore every irreducible component containing the point $f': D \to \mcX_2 \to \mcX_1$ has dimension at least $-K_{\mcX_1}-2(g+1)$ since $\mcX_1$ has local complete intersection singularities. Furthermore, by definition of terminal singularities, we have
\[
-K_{\mcX_1}=-K_{\mcX_2}+\sum_{i=1}^n a_i E_i, a_i >0.
\]
Thus if the image of $D$ in $\mcX_1$ intersects the singular locus, $-K_{\mcX_1} \cdot D$ is strictly larger than $-K_{\mcX_2}$, which is impossible.

Therefore we have a section $s': B\to \mcX_1$ which has the desired intersection numbers and lies in the smooth locus of the total space $\mcX_1$. Finally note that if a section lies in the smooth locus of the total space $\mcX_1$, then the section lies in the smooth locus of the morphism $\pi: \mcX_1 \to B$.
\end{proof}

\begin{proof}[Proof of the lemma \ref{lem:dim}]
Let $M$ be an irreducible component of the Kontsevich moduli space of genus $g$ stable maps with $g+1$ marked points to $X$ containing $f: C \to X$. Then the evaluation map $$ev: M  \to \underbrace{X \times \ldots \times X}_{g+1}$$ is dominant (here we use the fact that $\CC$ is uncountable). Let $$f': (D, d_1, \ldots, d_{g+1}) \to X$$ be a general point in the moduli space $M$. Then $D$ is also embedded and one can fix $g (\geq 0)$ general points in the curve $D$ and deform the curve along the normal direction at a general point. This implies that we have an exact sequence of sheaves:
\[
H^0(D, \mcN_{D/X}(-d_1-\ldots-d_{g}))\otimes \OO_D \to \mcN_{D/X}(-d_1-\ldots-d_{g}) \to Q \to 0,
\]
where $Q$ is a torsion sheaf on $D$ and $d_1, \ldots, d_g$ are general points in $D$. It follows from the exact sequence that $H^1(D, \mcN_{D/X})=0$ since a general degree $g$ line bundle has no $H^1$.

We also have short exact sequences
\begin{align*}
&0 \to \mcN_{D/X}(-d_1-\ldots-d_{k+1}) \to \mcN_{D/X}(-d_1-\ldots-d_k)\\
 \to &\mcN_{D/X}(-d_1-\ldots-d_k)|_{d_{k+1}} \to 0,
\end{align*}
for all $ k=0, \ldots, g$.

Again if the points $d_i$ are general, then the maps on global sections
\[
H^0(D, \mcN_{D/X}(-d_1-\ldots-d_k)) \to \mcN_{D/X}(-d_1-\ldots-d_k)\vert_{d_{k+1}}, 0 \leq k \leq g
\]
are surjective. Thus we have
\begin{align*}
&H^1(D, \mcN_{D/X})=H^1(D, \mcN_{D/X}(-d_1))=\ldots\\
=&H^1(D, \mcN_{D/X}(-d_1-\ldots-d_{g+1})=0.
\end{align*}
\end{proof}

Now we have all the results needed for the proof of Theorem \ref{main}.

\begin{proof}[Proof of Theorem \ref{main}]

Given a smooth cubic hypersurface $X$ over the function field $\CC(B)$ of a smooth projective curve $B$, one can find a Lefschetz pencil over $\PP^1_{\CC(B)}$ whose general fiber is a smooth cubic hypersurface of one dimension lower. If weak approximation holds for general fibers, then weak approximation holds for the total family (c.f. Theorem 3.1, \cite{WASurvey}). Thus it suffices to prove weak approximation for all smooth cubic surfaces.

Let $\mcX \to B$ be a standard model for a smooth cubic surface defined over the function field $\CC(B)$.

By Lemma \ref{lem:smoothlocus}, one may choose a section $s$ of $\pi: \mcX \to B$ which lies in the smooth locus of $\pi$. Given a finite number of formal sections $\whts_i, 1 \leq i \leq m$ over $b_i \in B, 1 \leq i \leq m$, let $\wht{L}_i$ be the line in the formal neighborhood $\mcX_{\whtOO_{B, b_i}}$ which joins the formal sections $\whts_i$ and $s$. Let $\whts'_i$ be the third intersection points of $\wht{L}_i$ with the formal neighborhood. Up to perturbing the formal sections $\whts_i$, we may assume the three local formal sections are not the same.

Choose an integer $N$ large enough. We claim that there is a line $L$ defined over the field $\CC(B)$, which contains the rational point $p$ corresponding to $s$, intersects the generic fiber $\mcX_\eta$ at a cycle of degree $3$, and agrees with $\wht{L}_i$ to order $N$. This is equivalent to weak approximation for the space of lines through the rational point $p$. Since the space is isomorphic to $\PP^2$, weak approximation holds.

If the line $L$ intersects the generic fiber at two other rational points, then we connect the two sections corresponding to the two rational points with rational curves in general fibers and smooth them with the jet data fixed.

\begin{figure}[h]
 \centering
 \includegraphics[width=4.4in]{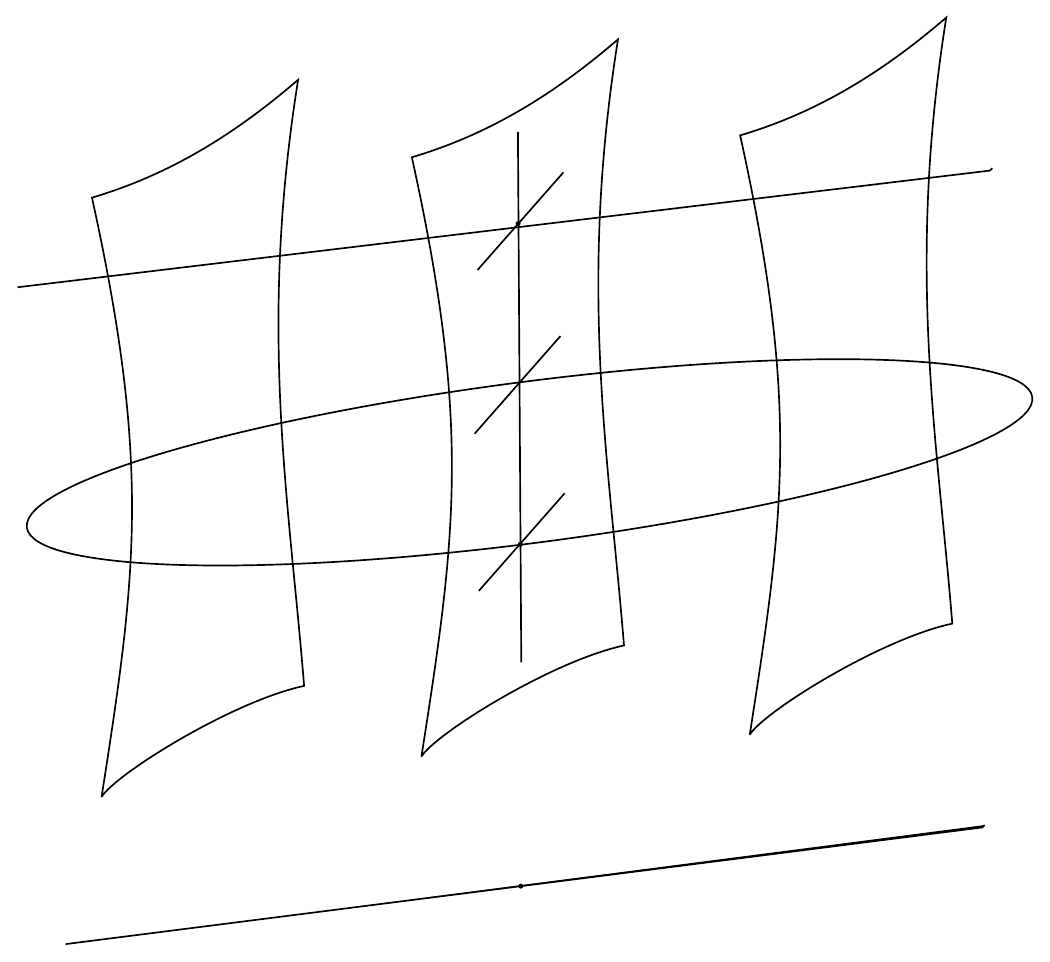}
 \put(-0.20,5.00){$\mathcal{X}$}
 \put(-2.55,4.20){$S|\SP \whtOO_{b_1, B}$}
 \put(-2.55,3.30){$\wht{S}_1'$}
 \put(-2.55,2.50){$\wht{S}_1$}
\put(-2.40, 1.70){$L$}
 \put(-2.65,0.20){$b_1$}
 \put(-0.20,4.10){$S(B)$}
 \put(-0.20,2.65){$C$}
 \put(-0.20,0.50){$B$}
 \caption{Producing the multisection $C$} 
 \label{fig:C} 
\end{figure}

So we may assume that there is a degree $2$ multisection $C \to \mcX$ such that the formal sections induced by $C$ agree with $\whts_i$ and $\whts'_i$ to order $N$. See Figure \ref{fig:C} for an illustration of the situation (we draw the section $s$ and its restriction to the formal neighborhood differently so that it is easier to visualize).

Make the base change $C \to B$, which is et{\'a}le over the points $b_i$. Denote the preimages of $b_i$ to be $c_i$ and $c'_i$. The formal sections $\whts_i$ and $\whts'_i$ induce formal sections over $c_i$ and $c_i'$, which will still be denoted by $\whts_i$ and $\whts'_i$. The section $s$ also induces a section of the new family, still denoted by $s$. The degree $2$ multi-section $C$ induces another section $s_C$ of the new family, which agrees with $\whts_i'$ to order $N$.

We may arrange the degree $2$ cover $C \to B$ to be \'etale over all the points whose fibers are singular. So the \'etale neighborhood of singular fibers of the new family over $C$ is isomorphic to the \'etale neighborhood of the corresponding singular fibers over $B$. As a consequence, the family over $C$ is a standard model over $C$ in the sense of \ref{Corti}.

By Lemma \ref{lem:smoothlocus}, there is a section $\tilde{s}_C$ of the family over $C$, which agrees with $\whts_i'$ to order $N$ and otherwise lies in the smooth locus of the fibration. Then by Lemma \ref{lem:approximate}, there is a section $\sigma_C$ of the new family which agrees with both $\whts'_i$ and the restriction of $s$ to the formal fibers over $c_i$ to order $N$. See Figure \ref{fig:WAC} for an illustration of the situation.

\begin{figure}[h]
  \centering
 \includegraphics[width=4.4in]{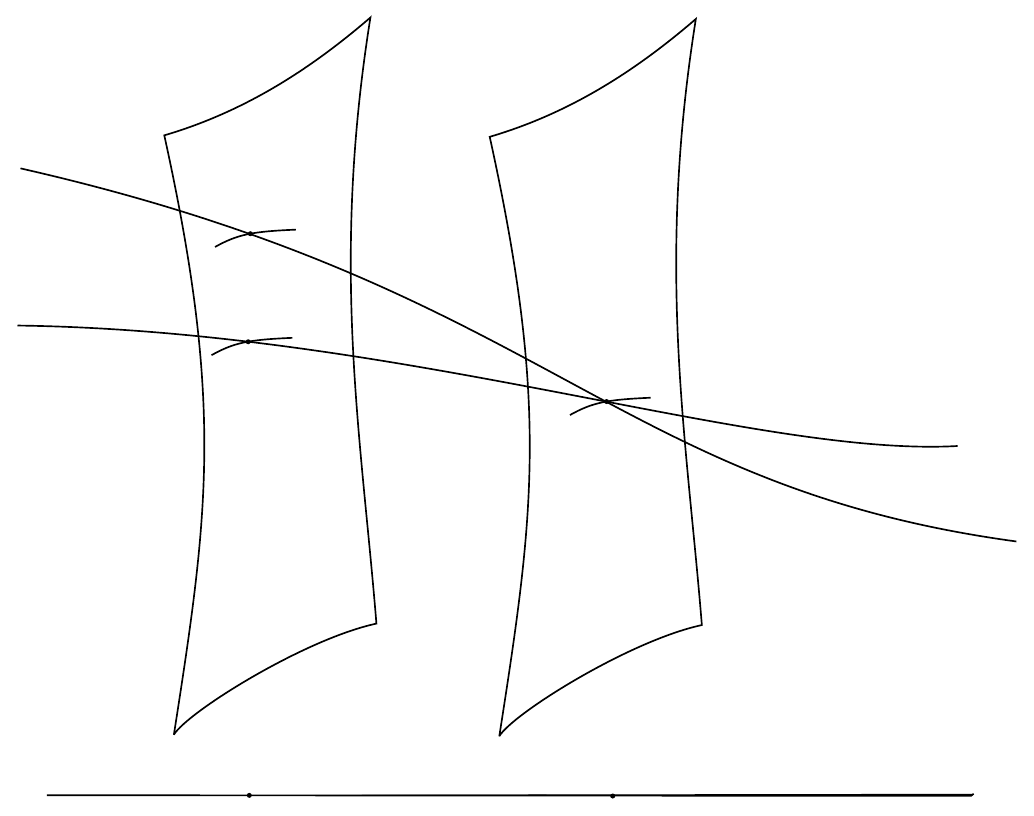}
 \put(-2.25,2.30){$\wht{S}_1'$}
 \put(-4.05,3.20){$S|\SP \whtOO_{b_1, B}$}
 \put(-4.05,2.60){$\wht{S}_1$}
 \put(-0.3,1.90){$S_C(C)$}
 \put(-0,1.40){$\sigma_C(C)$}
 \put(-0,0){$C$}
 \put(-4.15,-.10){$C_1$}
 \put(-2.25,-.10){$C_1'$}
 \caption{Weak approximation for the family over $C$} 
 \label{fig:WAC} 

\end{figure}

The section $\sigma_C$ gives a degree $2$ multisection, still denoted by $\sigma_C(C)$, of the original family $\pi: \mcX \to B$, which agrees with $s$ and $\whts_i'$ to order $N$ in the formal neighborhood of the points $b_i$. Take the family of lines spanned by the degree $2$ multisection $\sigma_C(C)$. This family corresponds to a line $\tilde{L}$ defined over the generic fiber, i.e. the function field $\CC(B)$. The line $\tilde{L}$ agrees with the line $L$ to order $N$ over the points $b_i$ by construction. We have a third intersection point of $\tilde{L}$ with the cubic surface, necessarily defined over $\CC(B)$. The section corresponding to this rational point will agree with the formal sections $\whts_i$ to order $N$ by construction, thus completing the proof (c.f. Figure \ref{fig:final}).

\begin{figure}[h]
 \centering
 \includegraphics[width=4.4in]{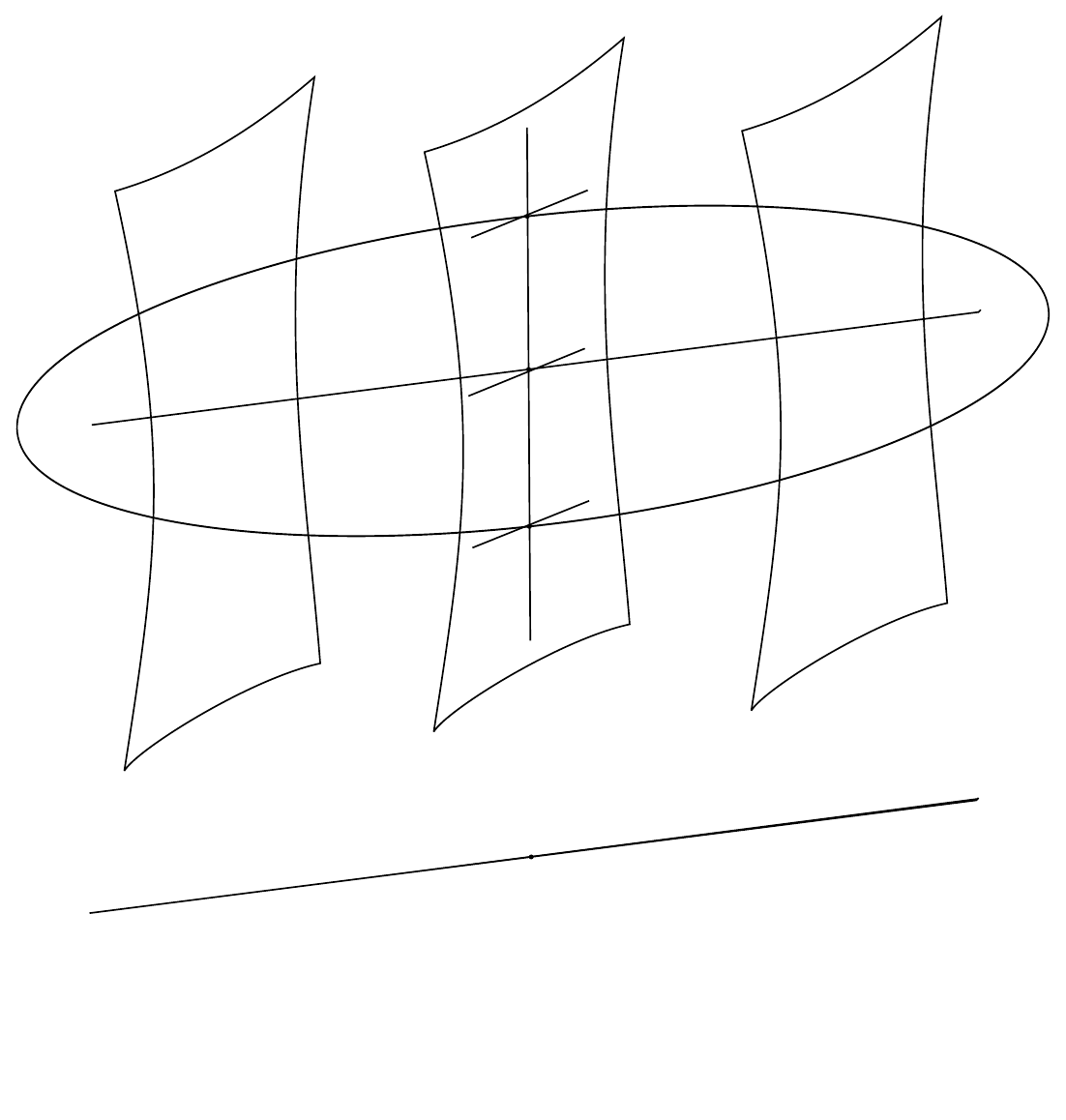}
 \put(-2.55,4.82){$S|\SP \whtOO_{b_1, B}$}
 \put(-2.55,4.02){$\wht{S}_1$}
 \put(-2.55,3.22){$\wht{S}_1'$}
 \put(-2.40, 2.50){$\tilde{L}$}
 \put(-0.55,3.40){$\sigma_C(C)$}
 \put(-0.55,1.40){$B$}
 \caption{Producing the section with desired jet data.} 
 \label{fig:final} 
\end{figure}

\end{proof}



\begin{thebibliography}{KMM92}

\bibitem[BW79]{WallCubic}
J.~W. Bruce and C.~T.~C. Wall.
\newblock On the classification of cubic surfaces.
\newblock {\em J. London Math. Soc. (2)}, 19(2):245--256, 1979.

\bibitem[CTG04]{CTGilleWA}
Jean-Louis Colliot-Th{\'e}l{\`e}ne and Philippe Gille.
\newblock Remarques sur l'approximation faible sur un corps de fonctions d'une
  variable.
\newblock In {\em Arithmetic of higher-dimensional algebraic varieties ({P}alo
  {A}lto, {CA}, 2002)}, volume 226 of {\em Progr. Math.}, pages 121--134.
  Birkh\"auser Boston, Boston, MA, 2004.

\bibitem[Cor96]{CortiCubic}
Alessio Corti.
\newblock Del {P}ezzo surfaces over {D}edekind schemes.
\newblock {\em Ann. of Math. (2)}, 144(3):641--683, 1996.

\bibitem[dJHS11]{dHS}
A.~J. de~Jong, Xuhua He, and Jason~Michael Starr.
\newblock Families of rationally simply connected varieties over surfaces and
  torsors for semisimple groups.
\newblock {\em Publ. Math. Inst. Hautes \'Etudes Sci.}, (114):1--85, 2011.


\bibitem[GHS03]{GHS03}
Tom Graber, Joe Harris, and Jason Starr.
\newblock Families of rationally connected varieties.
\newblock {\em J. Amer. Math. Soc.}, 16(1):57--67 (electronic), 2003.

\bibitem[Has10]{WASurvey}
Brendan Hassett.
\newblock Weak approximation and rationally connected varieties over function
  fields of curves.
\newblock In {\em Vari\'et\'es rationnellement connexes: aspects
  g\'eom\'etriques et arithm\'etiques}, volume~31 of {\em Panor. Synth\`eses},
  pages 115--153. Soc. Math. France, Paris, 2010.

\bibitem[HT06]{HT06}
Brendan Hassett and Yuri Tschinkel.
\newblock Weak approximation over function fields.
\newblock {\em Invent. Math.}, 163(1):171--190, 2006.

\bibitem[HT08]{BadReduction}
Brendan Hassett and Yuri Tschinkel.
\newblock Approximation at places of bad reduction for rationally connected
  varieties.
\newblock {\em Pure Appl. Math. Q.}, 4(3, Special Issue: In honor of Fedor
  Bogomolov. Part 2):743--766, 2008.

\bibitem[HT09]{WAhypersurface}
Brendan Hassett and Yuri Tschinkel.
\newblock Weak approximation for hypersurfaces of low degree.
\newblock In {\em Algebraic geometry---{S}eattle 2005. {P}art 2}, volume~80 of
  {\em Proc. Sympos. Pure Math.}, pages 937--955. Amer. Math. Soc., Providence,
  RI, 2009.

\bibitem[KM99]{KeelMckernan}
Se{\'a}n Keel and James $\text{M}^\text{c}$Kernan.
\newblock Rational curves on quasi-projective surfaces.
\newblock {\em Mem. Amer. Math. Soc.}, 140(669):viii+153, 1999.


\bibitem[Kne13]{Knecht}
Amanda Knecht.
\newblock Weak approximation for general degree two del {P}ezzo surfaces.
\newblock {\em Proc. Amer. Math. Soc.}, 141(3):801--811, 2013.


\bibitem[KMM92]{KMM92RC}
J{\'a}nos Koll{\'a}r, Yoichi Miyaoka, and Shigefumi Mori.
\newblock Rationally connected varieties.
\newblock {\em J. Algebraic Geom.}, 1(3):429--448, 1992.

\bibitem[Kol96]{Kollar96}
J{\'a}nos Koll{\'a}r.
\newblock {\em Rational curves on algebraic varieties}, volume~32 of {\em
  Ergebnisse der Mathematik und ihrer Grenzgebiete. 3. Folge. A Series of
  Modern Surveys in Mathematics [Results in Mathematics and Related Areas. 3rd
  Series. A Series of Modern Surveys in Mathematics]}.
\newblock Springer-Verlag, Berlin, 1996.

\bibitem[Mad06]{MadoreCubic}
David~A. Madore.
\newblock Approximation faible aux places de bonne r\'eduction sur les surfaces
  cubiques sur les corps de fonctions.
\newblock {\em Bull. Soc. Math. France}, 134(4):475--485, 2006.

\bibitem[SD01]{SDCubic}
Peter Swinnerton-Dyer.
\newblock Weak approximation and {$R$}-equivalence on cubic surfaces.
\newblock In {\em Rational points on algebraic varieties}, volume 199 of {\em
  Progr. Math.}, pages 357--404. Birkh\"auser, Basel, 2001.


\bibitem[TZ13]{WAIsotrivial}
Zhiyu Tian and Runpu Zong.
\newblock {Weak approximation for isotrivial families}.
\newblock {arXiv e-preprint, \url{http://arxiv.org/abs/1003.3502}}, 2013.

\bibitem[Wei94]{weibel94}
Charles~A. Weibel.
\newblock {\em An introduction to homological algebra}, volume~38 of {\em
  Cambridge Studies in Advanced Mathematics}.
\newblock Cambridge University Press, Cambridge, 1994.

\bibitem[Xu12a]{XuSRC}
Chenyang Xu.
\newblock Strong rational connectedness of surfaces.
\newblock {\em J. Reine Angew. Math.}, 665:189--205, 2012.

\bibitem[Xu12b]{XuWA}
Chenyang Xu.
\newblock Weak approximation for low degree {D}el {P}ezzo surfaces.
\newblock {\em J. Algebraic Geom.}, 21(4):753--767, 2012.

\end{thebibliography}
\end{document}